\newcommand{\J}[1]{\mathcal{J}\left(#1,X\sim\mathcal{P}\right)}
\newcommand{\E}[1]{\mathbb{E}\left[#1\right]}
\def \jg {\J{f}}
\def \distr {\mathcal{P}}
\def \jgf {\E{f\left(X\right)}-f\left(\E{X}\right)}
\theoremstyle{plain}
\newtheorem{cor}{Corollary}
\newtheorem{prop}{Proposition}
\newtheorem{exercise}{Exercise}
\numberwithin{equation}{section}
\chardef\@x10\chardef\@xv60
\def\tcitime{
\def\@time{%
  \@minute\time\@hour\@minute\divide\@hour\@xv
  \ifnum\@hour<\@x 0\fi\the\@hour:%
  \multiply\@hour\@xv\advance\@minute-\@hour
  \ifnum\@minute<\@x 0\fi\the\@minute
  }}%
\def\QCTOpt[#1]#2{%
  \def\QCTOptB{#1}
  \def\QCTOptA{#2}
}
\def\QCTNOpt#1{%
  \def\QCTOptA{#1}
  \let\QCTOptB\empty
}
\def\Qct{%
  \@ifnextchar[{%
    \QCTOpt}{\QCTNOpt}
}
\def\QCBOpt[#1]#2{%
  \def\QCBOptB{#1}
  \def\QCBOptA{#2}
}
\def\QCBNOpt#1{%
  \def\QCBOptA{#1}
  \let\QCBOptB\empty
}
\def\Qcb{%
  \@ifnextchar[{%
    \QCBOpt}{\QCBNOpt}
}
\def\PrepCapArgs{%
  \ifx\QCBOptA\empty
    \ifx\QCTOptA\empty
      {}%
    \else
      \ifx\QCTOptB\empty
        {\QCTOptA}%
      \else
        [\QCTOptB]{\QCTOptA}%
      \fi
    \fi
  \else
    \ifx\QCBOptA\empty
      {}%
    \else
      \ifx\QCBOptB\empty
        {\QCBOptA}%
      \else
        [\QCBOptB]{\QCBOptA}%
      \fi
    \fi
  \fi
}
\def\GRAPHICSPS#1{%
 \ifcase\GRAPHICSTYPE
   \special{ps: #1}%
 \or
   \special{language "PS", include "#1"}%
 \fi
}%
\def\graffile#1#2#3#4{%
    \bgroup
    \leavevmode
    \@ifundefined{bbl@deactivate}{\def~{\string~}}{\activesoff}
    \raise -#4 \BOXTHEFRAME{%
        \hbox to #2{\raise #3\hbox to #2{\null #1\hfil}}}%
    \egroup
}%
\def\draftbox#1#2#3#4{%
 \leavevmode\raise -#4 \hbox{%
  \frame{\rlap{\protect\tiny #1}\hbox to #2%
   {\vrule height#3 width\z@ depth\z@\hfil}%
  }%
 }%
}%
\newif\ifwasdraft
\def\GRAPHIC#1#2#3#4#5{%
 \ifnum\draft=\@ne\draftbox{#2}{#3}{#4}{#5}%
  \else\graffile{#1}{#3}{#4}{#5}%
  \fi
 }%
\def\addtoLaTeXparams#1{%
    \edef\LaTeXparams{\LaTeXparams #1}}%
\newif\ifBoxFrame \BoxFramefalse
\newif\ifOverFrame \OverFramefalse
\newif\ifUnderFrame \UnderFramefalse
\def\BOXTHEFRAME#1{%
   \hbox{%
      \ifBoxFrame
         \frame{#1}%
      \else
         {#1}%
      \fi
   }%
}
\def\doFRAMEparams#1{\BoxFramefalse\OverFramefalse\UnderFramefalse\readFRAMEparams#1\end}%
\def\readFRAMEparams#1{%
 \ifx#1\end%
  \let\next=\relax
  \else
  \ifx#1i\dispkind=\z@\fi
  \ifx#1d\dispkind=\@ne\fi
  \ifx#1f\dispkind=\tw@\fi
  \ifx#1t\addtoLaTeXparams{t}\fi
  \ifx#1b\addtoLaTeXparams{b}\fi
  \ifx#1p\addtoLaTeXparams{p}\fi
  \ifx#1h\addtoLaTeXparams{h}\fi
  \ifx#1X\BoxFrametrue\fi
  \ifx#1O\OverFrametrue\fi
  \ifx#1U\UnderFrametrue\fi
  \ifx#1w
    \ifnum\draft=1\wasdrafttrue\else\wasdraftfalse\fi
    \draft=\@ne
  \fi
  \let\next=\readFRAMEparams
  \fi
 \next
 }%
\def\IFRAME#1#2#3#4#5#6{%
      \bgroup
      \let\QCTOptA\empty
      \let\QCTOptB\empty
      \let\QCBOptA\empty
      \let\QCBOptB\empty
      #6%
      \parindent=0pt%
      \leftskip=0pt
      \rightskip=0pt
      \setbox0 = \hbox{\QCBOptA}%
      \@tempdima = #1\relax
      \ifOverFrame
          \typeout{This is not implemented yet}%
          \show\HELP
      \else
         \ifdim\wd0>\@tempdima
            \advance\@tempdima by \@tempdima
            \ifdim\wd0 >\@tempdima
               \textwidth=\@tempdima
               \setbox1 =\vbox{%
                  \noindent\hbox to \@tempdima{\hfill\GRAPHIC{#5}{#4}{#1}{#2}{#3}\hfill}\\%
                  \noindent\hbox to \@tempdima{\parbox[b]{\@tempdima}{\QCBOptA}}%
               }%
               \wd1=\@tempdima
            \else
               \textwidth=\wd0
               \setbox1 =\vbox{%
                 \noindent\hbox to \wd0{\hfill\GRAPHIC{#5}{#4}{#1}{#2}{#3}\hfill}\\%
                 \noindent\hbox{\QCBOptA}%
               }%
               \wd1=\wd0
            \fi
         \else
            \ifdim\wd0>0pt
              \hsize=\@tempdima
              \setbox1 =\vbox{%
                \unskip\GRAPHIC{#5}{#4}{#1}{#2}{0pt}%
                \break
                \unskip\hbox to \@tempdima{\hfill \QCBOptA\hfill}%
              }%
              \wd1=\@tempdima
           \else
              \hsize=\@tempdima
              \setbox1 =\vbox{%
                \unskip\GRAPHIC{#5}{#4}{#1}{#2}{0pt}%
              }%
              \wd1=\@tempdima
           \fi
         \fi
         \@tempdimb=\ht1
         \advance\@tempdimb by \dp1
         \advance\@tempdimb by -#2%
         \advance\@tempdimb by #3%
         \leavevmode
         \raise -\@tempdimb \hbox{\box1}%
      \fi
      \egroup%
}%
\def\DFRAME#1#2#3#4#5{%
 \begin{center}
     \let\QCTOptA\empty
     \let\QCTOptB\empty
     \let\QCBOptA\empty
     \let\QCBOptB\empty
     \ifOverFrame 
        #5\QCTOptA\par
     \fi
     \GRAPHIC{#4}{#3}{#1}{#2}{\z@}
     \ifUnderFrame 
        \nobreak\par\nobreak#5\QCBOptA
     \fi
 \end{center}%
 }%
\def\FFRAME#1#2#3#4#5#6#7{%
 \begin{figure}[#1]%
  \let\QCTOptA\empty
  \let\QCTOptB\empty
  \let\QCBOptA\empty
  \let\QCBOptB\empty
  \ifOverFrame
    #4
    \ifx\QCTOptA\empty
    \else
      \ifx\QCTOptB\empty
        \caption{\QCTOptA}%
      \else
        \caption[\QCTOptB]{\QCTOptA}%
      \fi
    \fi
    \ifUnderFrame\else
      \label{#5}%
    \fi
  \else
    \UnderFrametrue%
  \fi
  \begin{center}\GRAPHIC{#7}{#6}{#2}{#3}{\z@}\end{center}%
  \ifUnderFrame
    #4
    \ifx\QCBOptA\empty
      \caption{}%
    \else
      \ifx\QCBOptB\empty
        \caption{\QCBOptA}%
      \else
        \caption[\QCBOptB]{\QCBOptA}%
      \fi
    \fi
    \label{#5}%
  \fi
  \end{figure}%
 }%
\def\makeactives{
  \catcode`\"=\active
  \catcode`\;=\active
  \catcode`\:=\active
  \catcode`\'=\active
  \catcode`\~=\active
}
   \gdef\activesoff{%
      \def"{\string"}
      \def;{\string;}
      \def:{\string:}
      \def'{\string'}
      \def~{\string~}
    }
\def\FRAME#1#2#3#4#5#6#7#8{%
 \bgroup
 \ifnum\draft=\@ne
   \wasdrafttrue
 \else
   \wasdraftfalse%
 \fi
 \def\LaTeXparams{}%
 \dispkind=\z@
 \def\LaTeXparams{}%
 \doFRAMEparams{#1}%
 \ifnum\dispkind=\z@\IFRAME{#2}{#3}{#4}{#7}{#8}{#5}\else
  \ifnum\dispkind=\@ne\DFRAME{#2}{#3}{#7}{#8}{#5}\else
   \ifnum\dispkind=\tw@
    \edef\@tempa{\noexpand\FFRAME{\LaTeXparams}}%
    \@tempa{#2}{#3}{#5}{#6}{#7}{#8}%
    \fi
   \fi
  \fi
  \ifwasdraft\draft=1\else\draft=0\fi{}%
  \egroup
 }%
\def\TEXUX#1{"texux"}
\long\def\QQQ#1#2{%
     \long\expandafter\def\csname#1\endcsname{#2}}%
\long\def\QQA#1#2{}%
\def\QTR#1#2{{\csname#1\endcsname #2}}
\def\EXPAND#1[#2]#3{}%
\def\NOEXPAND#1[#2]#3{}%
\def\LaTeXparent#1{}%
\def\ChildStyles#1{}%
\def\ChildDefaults#1{}%
\def\QTagDef#1#2#3{}%
  \providecommand{\UNICODE}[2][]{}
\def\QQfnmark#1{\footnotemark}
 \def\abstract{%
  \if@twocolumn
   \section*{Abstract (Not appropriate in this style!)}%
   \else \small 
   \begin{center}{\bf Abstract\vspace{-.5em}\vspace{\z@}}\end{center}%
   \quotation 
   \fi
  }%
   \def\registered{\relax\ifmmode{}\r@gistered
                    \else$\m@th\r@gistered$\fi}%
 \def\r@gistered{^{\ooalign
  {\hfil\raise.07ex\hbox{$\scriptstyle\rm\text{R}$}\hfil\crcr
  \mathhexbox20D}}}}{}%
\newdimen\theight
\def\Column{%
 \vadjust{\setbox\z@=\hbox{\scriptsize\quad\quad tcol}%
  \theight=\ht\z@\advance\theight by \dp\z@\advance\theight by \lineskip
  \kern -\theight \vbox to \theight{%
   \rightline{\rlap{\box\z@}}%
   \vss
   }%
  }%
 }%
\def\qed{%
 \ifhmode\unskip\nobreak\fi\ifmmode\ifinner\else\hskip5\p@\fi\fi
 \hbox{\hskip5\p@\vrule width4\p@ height6\p@ depth1.5\p@\hskip\p@}%
 }%
\def\miss{\hbox{\vrule height2\p@ width 2\p@ depth\z@}}%
\def\tcol#1{{\baselineskip=6\p@ \vcenter{#1}} \Column}  %
\def\newfmtname{LaTeX2e}
  \DeclareOldFontCommand{\rm}{\normalfont\rmfamily}{\mathrm}
  \DeclareOldFontCommand{\sf}{\normalfont\sffamily}{\mathsf}
  \DeclareOldFontCommand{\tt}{\normalfont\ttfamily}{\mathtt}
  \DeclareOldFontCommand{\bf}{\normalfont\bfseries}{\mathbf}
  \DeclareOldFontCommand{\it}{\normalfont\itshape}{\mathit}
  \DeclareOldFontCommand{\sl}{\normalfont\slshape}{\@nomath\sl}
  \DeclareOldFontCommand{\sc}{\normalfont\scshape}{\@nomath\sc}
\def\alpha{{\Greekmath 010B}}%
\def\beta{{\Greekmath 010C}}%
\def\gamma{{\Greekmath 010D}}%
\def\delta{{\Greekmath 010E}}%
\def\epsilon{{\Greekmath 010F}}%
\def\zeta{{\Greekmath 0110}}%
\def\eta{{\Greekmath 0111}}%
\def\theta{{\Greekmath 0112}}%
\def\iota{{\Greekmath 0113}}%
\def\kappa{{\Greekmath 0114}}%
\def\lambda{{\Greekmath 0115}}%
\def\mu{{\Greekmath 0116}}%
\def\nu{{\Greekmath 0117}}%
\def\xi{{\Greekmath 0118}}%
\def\pi{{\Greekmath 0119}}%
\def\rho{{\Greekmath 011A}}%
\def\sigma{{\Greekmath 011B}}%
\def\tau{{\Greekmath 011C}}%
\def\upsilon{{\Greekmath 011D}}%
\def\phi{{\Greekmath 011E}}%
\def\chi{{\Greekmath 011F}}%
\def\psi{{\Greekmath 0120}}%
\def\omega{{\Greekmath 0121}}%
\def\varepsilon{{\Greekmath 0122}}%
\def\vartheta{{\Greekmath 0123}}%
\def\varpi{{\Greekmath 0124}}%
\def\varrho{{\Greekmath 0125}}%
\def\varsigma{{\Greekmath 0126}}%
\def\varphi{{\Greekmath 0127}}%
\def\nabla{{\Greekmath 0272}}
\def\FindBoldGroup{%
   {\setbox0=\hbox{$\mathbf{x\global\edef\theboldgroup{\the\mathgroup}}$}}%
}
\def\Greekmath#1#2#3#4{%
    \if@compatibility
        \ifnum\mathgroup=\symbold
           \mathchoice{\mbox{\boldmath$\displaystyle\mathchar"#1#2#3#4$}}%
                      {\mbox{\boldmath$\textstyle\mathchar"#1#2#3#4$}}%
                      {\mbox{\boldmath$\scriptstyle\mathchar"#1#2#3#4$}}%
                      {\mbox{\boldmath$\scriptscriptstyle\mathchar"#1#2#3#4$}}%
        \else
           \mathchar"#1#2#3#4%
        \fi 
    \else 
        \FindBoldGroup
        \ifnum\mathgroup=\theboldgroup 
           \mathchoice{\mbox{\boldmath$\displaystyle\mathchar"#1#2#3#4$}}%
                      {\mbox{\boldmath$\textstyle\mathchar"#1#2#3#4$}}%
                      {\mbox{\boldmath$\scriptstyle\mathchar"#1#2#3#4$}}%
                      {\mbox{\boldmath$\scriptscriptstyle\mathchar"#1#2#3#4$}}%
        \else
           \mathchar"#1#2#3#4%
        \fi     	    
	  \fi}
\newif\ifGreekBold  \GreekBoldfalse
\let\SAVEPBF=\pbf
\def\pbf{\GreekBoldtrue\SAVEPBF}%
  \newcounter{equationnumber}  
  \def\mathletters{%
     \addtocounter{equation}{1}
     \edef\@currentlabel{\theequation}%
     \setcounter{equationnumber}{\c@equation}
     \setcounter{equation}{0}%
     \edef\theequation{\@currentlabel\noexpand\alph{equation}}%
  }
    \def\BibTeX{{\rm B\kern-.05em{\sc i\kern-.025em b}\kern-.08em
                 T\kern-.1667em\lower.7ex\hbox{E}\kern-.125emX}}}{}%
\def\AmS{{\protect\usefont{OMS}{cmsy}{m}{n}%
                A\kern-.1667em\lower.5ex\hbox{M}\kern-.125emS}}}{}%
\def\@@eqncr{\let\@tempa\relax
    \ifcase\@eqcnt \def\@tempa{& & &}\or \def\@tempa{& &}%
      \else \def\@tempa{&}\fi
     \@tempa
     \if@eqnsw
        \iftag@
           \@taggnum
        \else
           \@eqnnum\stepcounter{equation}%
        \fi
     \fi
     \global\tag@false
     \global\@eqnswtrue
     \global\@eqcnt\z@\cr}
\def\TCItag{\@ifnextchar*{\@TCItagstar}{\@TCItag}}
\def\@TCItag#1{%
    \global\tag@true
    \global\def\@taggnum{(#1)}}
\def\@TCItagstar*#1{%
    \global\tag@true
    \global\def\@taggnum{#1}}
\let\DOTSI\relax
\def\RIfM@{\relax\ifmmode}%
\def\FN@{\futurelet\next}%
\def\iint{\DOTSI\intno@\tw@\FN@\ints@}%
\def\iiint{\DOTSI\intno@\thr@@\FN@\ints@}%
\def\iiiint{\DOTSI\intno@4 \FN@\ints@}%
\def\idotsint{\DOTSI\intno@\z@\FN@\ints@}%
\def\ints@{\findlimits@\ints@@}%
\newif\iflimtoken@
\newif\iflimits@
\def\findlimits@{\limtoken@true\ifx\next\limits\limits@true
 \else\ifx\next\nolimits\limits@false\else
 \limtoken@false\ifx\ilimits@\nolimits\limits@false\else
 \ifinner\limits@false\else\limits@true\fi\fi\fi\fi}%
\def\multint@{\int\ifnum\intno@=\z@\intdots@                          
 \else\intkern@\fi                                                    
 \ifnum\intno@>\tw@\int\intkern@\fi                                   
 \ifnum\intno@>\thr@@\int\intkern@\fi                                 
 \int}
\def\multintlimits@{\intop\ifnum\intno@=\z@\intdots@\else\intkern@\fi
 \ifnum\intno@>\tw@\intop\intkern@\fi
 \ifnum\intno@>\thr@@\intop\intkern@\fi\intop}%
\def\intic@{%
    \mathchoice{\hskip.5em}{\hskip.4em}{\hskip.4em}{\hskip.4em}}%
\def\negintic@{\mathchoice
 {\hskip-.5em}{\hskip-.4em}{\hskip-.4em}{\hskip-.4em}}%
\def\ints@@{\iflimtoken@                                              
 \def\ints@@@{\iflimits@\negintic@
   \mathop{\intic@\multintlimits@}\limits                             
  \else\multint@\nolimits\fi                                          
  \eat@}
 \else                                                                
 \def\ints@@@{\iflimits@\negintic@
  \mathop{\intic@\multintlimits@}\limits\else
  \multint@\nolimits\fi}\fi\ints@@@}%
\def\intkern@{\mathchoice{\!\!\!}{\!\!}{\!\!}{\!\!}}%
\def\plaincdots@{\mathinner{\cdotp\cdotp\cdotp}}%
\def\intdots@{\mathchoice{\plaincdots@}%
 {{\cdotp}\mkern1.5mu{\cdotp}\mkern1.5mu{\cdotp}}%
 {{\cdotp}\mkern1mu{\cdotp}\mkern1mu{\cdotp}}%
 {{\cdotp}\mkern1mu{\cdotp}\mkern1mu{\cdotp}}}%
\def\RIfM@{\relax\protect\ifmmode}
\def\text{\RIfM@\expandafter\text@\else\expandafter\mbox\fi}
\let\nfss@text\text
\def\text@#1{\mathchoice
   {\textdef@\displaystyle\f@size{#1}}%
   {\textdef@\textstyle\tf@size{\firstchoice@false #1}}%
   {\textdef@\textstyle\sf@size{\firstchoice@false #1}}%
   {\textdef@\textstyle \ssf@size{\firstchoice@false #1}}%
   \glb@settings}
\def\textdef@#1#2#3{\hbox{{%
                    \everymath{#1}%
                    \let\f@size#2\selectfont
                    #3}}}
\newif\iffirstchoice@
\def\Let@{\relax\iffalse{\fi\let\\=\cr\iffalse}\fi}%
\def\vspace@{\def\vspace##1{\crcr\noalign{\vskip##1\relax}}}%
\def\multilimits@{\bgroup\vspace@\Let@
 \baselineskip\fontdimen10 \scriptfont\tw@
 \advance\baselineskip\fontdimen12 \scriptfont\tw@
 \lineskip\thr@@\fontdimen8 \scriptfont\thr@@
 \lineskiplimit\lineskip
 \vbox\bgroup\ialign\bgroup\hfil$\m@th\scriptstyle{##}$\hfil\crcr}%
\def\Sb{_\multilimits@}%
\def\endSb{\crcr\egroup\egroup\egroup}%
\def\Sp{^\multilimits@}%
\newdimen\ex@
\def\rightarrowfill@#1{$#1\m@th\mathord-\mkern-6mu\cleaders
 \hbox{$#1\mkern-2mu\mathord-\mkern-2mu$}\hfill
 \mkern-6mu\mathord\rightarrow$}%
\def\leftarrowfill@#1{$#1\m@th\mathord\leftarrow\mkern-6mu\cleaders
 \hbox{$#1\mkern-2mu\mathord-\mkern-2mu$}\hfill\mkern-6mu\mathord-$}%
\def\leftrightarrowfill@#1{$#1\m@th\mathord\leftarrow
\mkern-6mu\cleaders
 \hbox{$#1\mkern-2mu\mathord-\mkern-2mu$}\hfill
 \mkern-6mu\mathord\rightarrow$}%
\def\overrightarrow{\mathpalette\overrightarrow@}%
\def\overrightarrow@#1#2{\vbox{\ialign{##\crcr\rightarrowfill@#1\crcr
 \noalign{\kern-\ex@\nointerlineskip}$\m@th\hfil#1#2\hfil$\crcr}}}%
\def\overleftarrow{\mathpalette\overleftarrow@}%
\def\overleftarrow@#1#2{\vbox{\ialign{##\crcr\leftarrowfill@#1\crcr
 \noalign{\kern-\ex@\nointerlineskip}$\m@th\hfil#1#2\hfil$\crcr}}}%
\def\overleftrightarrow{\mathpalette\overleftrightarrow@}%
\def\overleftrightarrow@#1#2{\vbox{\ialign{##\crcr
   \leftrightarrowfill@#1\crcr
 \noalign{\kern-\ex@\nointerlineskip}$\m@th\hfil#1#2\hfil$\crcr}}}%
\def\underrightarrow{\mathpalette\underrightarrow@}%
\def\underrightarrow@#1#2{\vtop{\ialign{##\crcr$\m@th\hfil#1#2\hfil
  $\crcr\noalign{\nointerlineskip}\rightarrowfill@#1\crcr}}}%
\def\underleftarrow{\mathpalette\underleftarrow@}%
\def\underleftarrow@#1#2{\vtop{\ialign{##\crcr$\m@th\hfil#1#2\hfil
  $\crcr\noalign{\nointerlineskip}\leftarrowfill@#1\crcr}}}%
\def\underleftrightarrow{\mathpalette\underleftrightarrow@}%
\def\underleftrightarrow@#1#2{\vtop{\ialign{##\crcr$\m@th
  \hfil#1#2\hfil$\crcr
 \noalign{\nointerlineskip}\leftrightarrowfill@#1\crcr}}}%
\def\qopnamewl@#1{\mathop{\operator@font#1}\nlimits@}
\let\nlimits@\displaylimits
\def\setboxz@h{\setbox\z@\hbox}
\def\varlim@#1#2{\mathop{\vtop{\ialign{##\crcr
 \hfil$#1\m@th\operator@font lim$\hfil\crcr
 \noalign{\nointerlineskip}#2#1\crcr
 \noalign{\nointerlineskip\kern-\ex@}\crcr}}}}
 \def\rightarrowfill@#1{\m@th\setboxz@h{$#1-$}\ht\z@\z@
  $#1\copy\z@\mkern-6mu\cleaders
  \hbox{$#1\mkern-2mu\box\z@\mkern-2mu$}\hfill
  \mkern-6mu\mathord\rightarrow$}
\def\leftarrowfill@#1{\m@th\setboxz@h{$#1-$}\ht\z@\z@
  $#1\mathord\leftarrow\mkern-6mu\cleaders
  \hbox{$#1\mkern-2mu\copy\z@\mkern-2mu$}\hfill
  \mkern-6mu\box\z@$}
\def\projlim{\qopnamewl@{proj\,lim}}
\def\injlim{\qopnamewl@{inj\,lim}}
\def\varinjlim{\mathpalette\varlim@\rightarrowfill@}
\def\varprojlim{\mathpalette\varlim@\leftarrowfill@}
\def\varliminf{\mathpalette\varliminf@{}}
\def\varliminf@#1{\mathop{\underline{\vrule\@depth.2\ex@\@width\z@
   \hbox{$#1\m@th\operator@font lim$}}}}
\def\varlimsup{\mathpalette\varlimsup@{}}
\def\varlimsup@#1{\mathop{\overline
  {\hbox{$#1\m@th\operator@font lim$}}}}
\def\align{\@verbatim \frenchspacing\@vobeyspaces \@alignverbatim
You are using the "align" environment in a style in which it is not defined.}
\let\csname endalign*\endcsname =\endtrivlist
\def\alignat{\@verbatim \frenchspacing\@vobeyspaces \@alignatverbatim
You are using the "alignat" environment in a style in which it is not defined.}
\let\csname endalignat*\endcsname =\endtrivlist
\def\xalignat{\@verbatim \frenchspacing\@vobeyspaces \@xalignatverbatim
You are using the "xalignat" environment in a style in which it is not defined.}
\let\csname endxalignat*\endcsname =\endtrivlist
\def\gather{\@verbatim \frenchspacing\@vobeyspaces \@gatherverbatim
You are using the "gather" environment in a style in which it is not defined.}
\let\csname endgather*\endcsname =\endtrivlist
\def\multiline{\@verbatim \frenchspacing\@vobeyspaces \@multilineverbatim
You are using the "multiline" environment in a style in which it is not defined.}
\let\csname endmultiline*\endcsname =\endtrivlist
\def\arrax{\@verbatim \frenchspacing\@vobeyspaces \@arraxverbatim
You are using a type of "array" construct that is only allowed in AmS-LaTeX.}
\def\tabulax{\@verbatim \frenchspacing\@vobeyspaces \@tabulaxverbatim
You are using a type of "tabular" construct that is only allowed in AmS-LaTeX.}
\let\csname endarrax*\endcsname =\endtrivlist
\let\csname endtabulax*\endcsname =\endtrivlist
 \def\endequation{%
     \ifmmode\ifinner 
      \iftag@
        \addtocounter{equation}{-1} 
        $\hfil
           \displaywidth\linewidth\@taggnum\egroup \endtrivlist
        \global\tag@false
        \global\@ignoretrue   
      \else
        $\hfil
           \displaywidth\linewidth\@eqnnum\egroup \endtrivlist
        \global\tag@false
        \global\@ignoretrue 
      \fi
     \else   
      \iftag@
        \addtocounter{equation}{-1} 
        \eqno \hbox{\@taggnum}
        \global\tag@false%
        $$\global\@ignoretrue
      \else
        \eqno \hbox{\@eqnnum}
        $$\global\@ignoretrue
      \fi
     \fi\fi
 } 
 \newif\iftag@ \tag@false
 \def\TCItag{\@ifnextchar*{\@TCItagstar}{\@TCItag}}
 \def\@TCItag#1{%
     \global\tag@true
     \global\def\@taggnum{(#1)}}
 \def\@TCItagstar*#1{%
     \global\tag@true
     \global\def\@taggnum{#1}}
     \def\tag{\@ifnextchar*{\@tagstar}{\@tag}}
     \def\@tag#1{%
         \global\tag@true
         \global\def\@taggnum{(#1)}}
     \def\@tagstar*#1{%
         \global\tag@true
         \global\def\@taggnum{#1}}
\begin{document}
\title[The Jensen Gap]{Bounds on the Jensen Gap, and Implications for Mean-Concentrated Distributions}
\author{Xiang Gao, Meera Sitharam, Adrian E. Roitberg}
\address{Department of Chemistry\\
and Department of Computer \& Information Science \& Engineering\\
University of Florida\\
Gainesville, FL 32611, USA.}
\email{\href{mailto: Xiang Gao <qasdfgtyuiop@gmail.com>}{qasdfgtyuiop@gmail.com}}
\urladdr{\url{https://scholar.google.com/citations?user=t2nOdxQAAAAJ}}
\date{13 October, 2019.}
\subjclass[2010]{ Primary 26D10, 97K50. Secondary 97K80.}
\keywords{Jensen gap; Jensen's inequality; Mean-concentrated distributions; Moments.}

\begin{abstract}
This paper gives upper and lower bounds on the gap in Jensen's inequality, i.e., the difference between the expected value of a function of a random variable and the  value of the function at the expected value of the random variable.
The bounds depend only on growth properties of the function and specific moments of the random variable. The bounds are particularly useful for distributions that are concentrated around the mean, a commonly occurring scenario such as the average of i.i.d. samples and in statistical mechanics.
\end{abstract}

\maketitle

\section{Introduction}

The widely used Jensen's inequality for convex functions, attributed to Danish
mathematician Johan Jensen,  dates back to 1906\cite{jensen1906fonctions}. 
The literature contains numerous bounds on the Jensen gap, defined as $\jg = \jgf$, where $X$ is a random variable with distribution $\distr$, and the function $f$ might be convex or nonconvex \cite{zlobec2004jensen}.

Example consequences and applications of the known bounds are: a number of famous classical
inequalities such as the generalized mean inequality,
and a special case the inequality of arithmetic and geometric means,
the H\"older's inequality, etc.  \cite{steele2004cauchy};  
commonly used results in information theory, e.g. non-negativity of Kullback-Leibler divergence
 \cite{cover2012elements}; 
 variational bounds for negative log likelihood used 
in statistics and machine learning methods such as the expectation maximization
algorithm\cite{dempster1977maximum},  and variational inference\cite{wainwright2008graphical}. 

Computing a hard-to-compute $\E{f(X)}$ appears in theoretical estimates in a variety of scenarios from statistical mechanics to machine learning theory. A common approach to tackle this problem is to make the approximation $\E{f(X)}\approx f\left(\E{X}\right)$ (for example $\left<\frac{1}{X}\right>\approx\frac{1}{\left<X\right>}$), and then show that the error, i.e.,  the Jensen gap, would be small enough for the application. Since the error itself is as hard to compute as $\E{f(X)}$, inequalities on the Jensen gap would help by giving easy-to-compute bounds.  Moments are commonly used to characterize distributions of random variables because of their relative ease to compute for many distributions. By establishing the connection between the Jensen gap and moments, we create a powerful tool for error estimation based on moment estimates.

As a concrete scenario, the Jensen gap has many useful interpretations in statistical mechanics such as the difference of average non-equilibrium
work and change of free energy, an important quantity to characterize the deviation of a thermodynamical process from a quasi-static process, as in Jarzynski equality\cite{PhysRevLett.78.2690}, and the fluctuation of thermodynamical quantities around their ensemble average, which is of common interest in physics. In machine learning theory, stochastic gradient descent is employed to 
minimize the so-called loss function, when learning the parameters of a function from a parametrized family; in this case the training inputs are sampled from a distribution and the loss function is an expectation, to be minimized with respect to the parameters.

In such scenarios, a common type of  random variable  has a distribution concentrated around its mean as described below.
\begin{enumerate}
\item  In estimating $\xi=f\left(\E{X}\right)$, an empirical average from samples is often used as an estimation of expectation, i.e. $\E{X}\approx\bar{X}=\frac{1}{\left|\mathcal{M}\right|}\sum_{X_i\in\mathcal{M}} X_i$ and $\xi\approx\hat\xi=f\left(\bar{X}\right)$. The bias of $\hat\xi$, i.e. $\mathbb{E}_{\mathcal{M}}[\hat\xi]-\xi$ is given by the Jensen gap where the random variable $\bar{X}$ has a distribution concentrated around its mean. The asymptotic growth behavior of the Jensen gap therefore gives an idea how fast we can push the bias to zero by increasing $N$.
\item Random variables with a distribution concentrated around the mean are very common in statistical mechanics. Since the number of particles in the system is usually the order of Avogadro constant $N_A\sim 10^{23}$, the distribution is so sharp that all the Jensen gaps become negligible. However, this is not the case in computer simulation or microscopic experiments, which usually have a much smaller system size. The asymptotic growth behavior of thermodynamic fluctuation (defined as the Jensen gap of function $\sqrt\cdot$ of random variable $E^2$) with the system size guides the simulation/experiment setup.
\end{enumerate}
Moments play an important role in studying random variables with a distribution concentrated on the mean, especially when the random variable is an empirical average of i.i.d random variables\cite{packwood2011moments}.  Our results will use moments to express the asymptotic growth behavior of the Jensen gap.

Next we give an elementary example to illustrate the inspiration behind our results, which establish the connection between the Jensen gap and the (absolute centered) moment $\sigma_{p}=\sqrt[p]{\E{\left|X-\mu\right|^{p}}}$, where $\mu=\E{X}$ is the expectation of random variable $X$.
Assume that for $\alpha>0$, $f\left(x\right)$ is $\alpha$-H\"older continuous over $\mathbb{R}$,
i.e. there exists a positive number $M$ such that for any $x\in\mathbb{R}$,
$\left|f\left(x\right)-f\left(\mu\right)\right|\leq M\left|x-\mu\right|^{\alpha}$. Then we have

\begin{multline}
\left|\jgf\right|\leq\int\left|f\left(X\right)-f\left(\mu\right)\right|d\mathcal{P}\left(X\right)\\
\leq M\int\left|x-\mu\right|^{\alpha}d\mathcal{P}\left(X\right)\leq M\sigma_{\alpha}^{\alpha}\label{eq:trivial-upper-bound}
\end{multline}
Similarly, if $f\left(x\right)-f\left(\mu\right)\geq M\left|x-\mu\right|^{\alpha}$ or $f\left(\mu\right)-f\left(x\right)\geq M\left|x-\mu\right|^{\alpha}$,
we can obtain an elementary lower bound on the Jensen gap
\begin{multline}
 \left|\jgf\right|=\int\left|f\left(X\right)-f\left(\mu\right)\right|d\mathcal{P}\left(X\right)\\
\geq M\int\left|x-\mu\right|^{\alpha}d\mathcal{P}\left(X\right)\geq M\sigma_{\alpha}^{\alpha}\label{eq:trivial-lower-bound}
\end{multline}
Our main results generalize these two elementary bounds as described in the next section.

\subsection{Contribution and Comparison}
We prove an upper and lower bound on the Jensen gap, summarized below, and demonstrate their tightness.  In the following, "upper bound of  $A$" means $\left|\jg\right|\leq A$ which means $-A\leq\jg\leq A$, while "lower bound of  $A$" means either $\jg\geq A$ or $-\jg\geq A$.
\begin{itemize}
\item For functions that approach $f\left(\mu\right)$ at $x\to\mu$ no
slower than $\left|x-\mu\right|^{\alpha}$, and grow as $x\to\pm\infty$
no faster than $\pm\left|x\right|^{n}$ for $n\geq\alpha$, 
\[
 \left|\jgf\right| \leq M\left(\sigma_{\alpha}^{\alpha}+\sigma_{n}^{n}\right)\leq M\left(1+\sigma_{n}^{n-\alpha}\right)\sigma_{n}^{\alpha}
\]
where $M=\sup_{x\neq\mu}\frac{\left|f\left(x\right)-f\left(\mu\right)\right|}{\left|x-\mu\right|^{\alpha}+\left|x-\mu\right|^{n}}$.
 This implies that  $\jgf$
 approaches $0$ no slower than $\sigma_{n}^{\alpha}$ as $\sigma_{n}\to0$.
\item For functions that either decrease or increase (but do not decrease on one side and increase on the other) to $f\left(\mu\right)$ as $x\to\mu$
no faster than $\left|x-\mu\right|^{\alpha}$, and grow to infinity at
$x\to\infty$ no slower than $\left|x\right|^{\beta}$ for $0\leq\beta\leq\alpha$,
\[
 \left|\jgf\right|\geq M\frac{\sigma_{\alpha/2}^{\alpha}}{1+\sigma_{\alpha-\beta}^{\alpha-\beta}}
\]
where $M=\inf_{x\neq\mu}\left\{ \left[f\left(x\right)-f\left(\mu\right)\right]\cdot\left(\frac{1}{\left|X-\mu\right|^{\beta}}+\frac{1}{\left|X-\mu\right|^{\alpha}}\right)\right\} $.
 This implies that  $ \jgf$
decreases to $0$ no faster than $\sigma_{\alpha/2}^{\alpha}$ as $\sigma_{\alpha/2}\to0$
as long as $\sigma_{\alpha-\beta}$ does not grow to infinity at the
same time.
\end{itemize}
Although neither our upper bounds nor our lower bounds require the function to be convex or concave, the condition in our lower bound is naturally satisfied by convex or concave functions as we will show in Section \ref{subsec:lower-convex}. 

In order to illustrate the flavor of the main results, we give simple examples that are direct consequences. We also compare the consequences of our main results with known lower bounds\cite{abramovich2004refining,simic2009jensen,walker2014lower,abramovich2016some,zlobec2004jensen,liao2017sharpening} and upper bounds\cite{costarelli2015sharp,Simic2009}. A major advantage of our result over these known results is their relative generality: our conditions on the function, its domain, and the distribution are weak (for example we do not require the function to be convex, and we do not require the distribution to be discrete). Among the above-mentioned bounds, \cite{simic2009jensen} and \cite{Simic2009} are only for discrete distributions, and  are omitted from the comparisons below. Besides the bounds as listed above, the Jensen's gap can also be estimated by Jensen-Ostrowski type inequalities\cite{cerone2015jensen, dragomir2016jensen, cerone2015inequalities, dragomir2016ostrowski, cerone2017jensen}.

\begin{example} 
Consider the Jensen gap of $\sin\left(x\right)$ and random variables with mean at $0$. Observe that $\sin\left(x\right)$ has a power series $\sin\left(x\right)=x-\frac{x^{3}}{6}+\frac{x^{5}}{120}+\cdots$, and 
by choosing $\alpha=n=1$, we get $\left|\J{\sin}\right|\leq \sigma_1^1$.
Also, since $\sin'\left(0\right)=1\neq0$, we can obtain a different result by studying $g\left(x\right)=\sin\left(x\right)-x$ (which has the same gap behavior, discussed  in Section \ref{subsec:upper-shifting} and Section \ref{subsec:lower-convex}) instead.
This time, by choosing $\alpha=n=3$, we can see that $\left|\J{\sin}\right|\leq \frac{\sigma_3^3}{6}$. If we are interested in the asymptotic behavior of the Jensen gap when the distribution is concentrated around the mean, we can conclude immediately that $\left|\J{\sin}\right|$ decreases to $0$ no slower than $\sim\sigma_3^3$ and $\sim\sigma_1^1$.
It is also possible to choose $\alpha=n=2$ and obtain $\left|\J{\sin}\right|\leq\frac{\sigma_{2}^{2}}{\pi}$.
Although this result is not as good as the $\sigma_{3}^{3}$ version
in terms of asymptotic behavior for non-heavy-tailed distributions
such as Gaussian distribution and Laplace distribution, the second
moment is usually more available than the third moment.
Our lower bound result is not useful in this example. In fact, since $\sin(x)$ is odd, any even distribution $\distr$ will result in a zero Jensen gap regardless of its moments. That is, it is impossible to obtain a non-trivial bound that is a function of only moments.

Results in \cite{costarelli2015sharp,walker2014lower} require the function to be convex, and are therefore not useful for this example. Since the domain is not $\left[0,A\right)$ or $\left(0,A\right]$ as required by \cite{abramovich2016some}, or $\left[0,+\infty\right)$ as required by \cite{abramovich2004refining}, these results do not apply either.
The result in \cite{liao2017sharpening} gives the same bound as our
$\sigma_{2}^{2}$ version, which is better than the $\J{\sin}\geq-\frac{\sigma_{2}^{2}}{2}$
given by \cite{zlobec2004jensen}. The fact that \cite{liao2017sharpening}
gives the same bound as ours is not just a coincidence, but can be 
attributed to the connections between our results and \cite{liao2017sharpening}
as described in Section \ref{subsec:upper-shifting}.
\end{example}
\begin{example}
Consider $\cos\left(x\right)$ and random variables with mean at $0$.
Observe that $\cos\left(x\right)$ has a power series $\cos\left(x\right)=1-\frac{x^{2}}{2}+\cdots$, we can choose $\alpha=n=2$ and see that $\left|\J{\cos}\right|\leq\frac{\sigma_2^2}{2}$. If we are interested in the asymptotic behavior, we can conclude that $\left|\J{\cos}\right|$ will decrease no slower than $\sim\sigma_2^2$, i.e. the variance of the distribution.
Again, our lower bound result is not useful in this example. In fact, although non-trivial, it is possible to construct  a probability distribution $\distr$ that makes the Jensen gap equal  $0$ and has arbitrary moments,$^1$ \footnotemark[1] \footnotetext[1]{$^1$To construct such a probability distribution, we can choose a discrete $\distr$ whose support is a subset of $\left\{\pm 2\pi k\middle|k\in\mathbb{N}\right\}$. By appropriate choice of probability values at discrete points, it is possible to make $\distr$ have any desired set of moments.} that is, it is impossible to obtain a non-trivial bound that is a function of only moments.

Results in \cite{costarelli2015sharp,walker2014lower} require the function to be convex, therefore not useful for this example. Since the domain is not $\left[0,A\right)$ or $\left(0,A\right]$ as required by \cite{abramovich2016some}, or $\left[0,+\infty\right)$ as required by \cite{abramovich2004refining}, these results do not apply.
Both our result and \cite{liao2017sharpening,zlobec2004jensen} are
able to get $\J{\cos}\geq-\frac{\sigma_{2}^{2}}{2}$. Since $\cos\left(x\right)-1\leq0$,
it is not hard to see that $\J{\cos}\leq0$, which is also given
by the result of \cite{liao2017sharpening}.
\end{example}
\begin{example}
Consider $\log\left(x\right)$ and random variable $X\in\left[a,+\infty\right)$ with $a>0$ that has $\E{X}=1$.
Since $\log'\left(x\right)=1\neq0$, we study $g\left(x\right)=\log\left(x\right)-\left(x-1\right)$ instead, which preserves gap behavior as in the $\sin$ example above.
Note that $\log\left(x\right)=\left(x-1\right)-\frac{1}{2}\left(x-1\right)^{2}+\cdots$. By choosing $\alpha=n=2$, we have
\[
\left|\J{\log}\right|\leq\frac{a-1-\log(a)}{(1-a)^2}\sigma_2^2
\]
i.e. $\left|\J{\log}\right|$
will decrease no slower than $\sim\sigma_2^2$, i.e. the variance of the distribution, as $\sigma_2\to0$. Since the $\log$ function is concave, our lower bound is useful (see Section \ref{subsec:lower-convex}). Choosing $\alpha=2$ and $\beta=1$, we get
\[
-\J{\log}\geq \frac{1}{2}\cdot\frac{\sigma_1^2}{1+\sigma_1}
\]
whereby the Jensen gap approaches 0 no faster than $\sigma_1^2$ as $\sigma_1\to0$.

The estimate given by \cite{costarelli2015sharp} is
\[
\J{\log}\leq \frac{1}{2a^2}\min_{c\geq a}\left\{\E{(X-c)^2}+(1-c)^2\right\}=\frac{\sigma_2^2}{2a^2}
\]
for $a<1$
\[
\frac{a-1-\log(a)}{(1-a)^2} < \frac{1}{2a^2}
\]
which means that we get a better result than \cite{costarelli2015sharp}.   Since the domain is not $\left[0,A\right)$ or $\left(0,A\right]$ as required by \cite{abramovich2016some}, or $\left(0,+\infty\right)$ as required by \cite{walker2014lower}, or $\left[0,+\infty\right)$ as required by \cite{abramovich2004refining}, these results do not apply. The result in \cite{zlobec2004jensen} in this case falls back to Jensen's inequality $\J{\log}\geq 0$. 
The result in \cite{liao2017sharpening} gives the same upper bound as ours and fall back to Jensen's inequality for the lower bound.
\end{example}
\begin{example}
Consider $f\left(x\right)=\sqrt{x}$ and random variables on $\left[0,+\infty\right)$ that has mean at $1$. Since $f'\left(1\right)=\frac{1}{2}\neq0$,
we study $g\left(x\right)=\sqrt{x}-\frac{x-1}{2}$ instead. Note that $\sqrt{x}=1+\frac{x-1}{2}-\frac{1}{8}(x-1)^2+\cdots$. By choosing $n=\alpha=2$, we see that $\left|\J{\sqrt\cdot}\right|\leq\frac{\sigma_2^2}{2}$, i.e. $\left|\J{\sqrt\cdot}\right|$
will decrease no slower than $\sim\sigma_2^2$, i.e. the variance of the distribution. Also, since $\sqrt{x}$ is concave, our lower bound is useful. By setting $\alpha=2$ and $\beta=1$, we get
\[
-\J{\sqrt\cdot}\geq \frac{1}{8}\cdot\frac{\sigma_1^2}{1+\sigma_1}
\]
whereby the Jensen gap will approach 0 no faster than $\sigma_1^2$.

Since the second order derivative is not bounded, \cite{costarelli2015sharp} does not apply to this example. Since $\frac{\sqrt{x}-\sqrt{0}}{x}$ is not defined on $0$ and does not have a power series on $0$, results in \cite{abramovich2016some} do not apply. Since the domain is not $\left(0,+\infty\right)$ as required by \cite{walker2014lower}, that result does not apply. Since $-\sqrt\cdot$ is superquadratic, \cite{abramovich2004refining} applies and has a result $-\J{\sqrt\cdot}\geq-\E{\sqrt{\left|X-1\right|}}=-\sigma_{1/2}^{1/2}$, which is not even an improvement of Jensen's inequality $-\J{\sqrt\cdot}\geq0$.  Again, the result in \cite{zlobec2004jensen} falls back to Jensen's inequality $-\J{\sqrt\cdot}\geq0$.
The result in \cite{liao2017sharpening} gives the same upper bound
as ours and fall back to Jensen's inequality for the lower bound.

\end{example}
\begin{example}
 Consider $f\left(x\right)=x^{4}$ and random variables that have mean at $1$. Since $f'\left(1\right)=4\neq0$,
we study $g\left(x\right)=x^{4}-4(x-1)$ instead. By choosing $\alpha=2$,
$n=4$, we see that $\left|\jg\right|\leq\frac{7+\sqrt{41}}{2}\left(1+\sigma_4^2\right)\sigma_4^2$, i.e. $\left|\jg\right|$
will decrease no slower than $\sim\sigma_4^{2}$. Also, since $f(x) = x^4$ is convex, our lower bound is useful. By choosing $\alpha=\beta=2$, we get $\jg\geq 2\sigma_1^2$ whereby the Jensen gap will decrease to 0 no faster than $\sigma_1^2$.

Since the second order derivative is not bounded, results in \cite{costarelli2015sharp} do not apply to this example. Since the domain is not $\left[0,A\right)$ or $\left(0,A\right]$ as required by \cite{abramovich2016some}, or $\left(0,+\infty\right)$ as required by \cite{walker2014lower}, or $\left[0,+\infty\right)$ as required by \cite{abramovich2004refining}, these results do not apply. Again \cite{zlobec2004jensen} falls back to Jensen's inequality $\J{f}\geq0$.
The Jensen gap of $x^{4}$ on $\mathbb{R}$ with $\mu=1$. The result in \cite{liao2017sharpening} gives a trivial upper bound $\J{f}\leq+\infty$
and a lower bound $\J{f}\geq2\sigma_{2}^{2}$. This lower bound
gives better numerical values and is usually easier to compute compared
with ours.
\end{example}

\section{First Main result: Upper bound}
\label{subsec:upper-main}
We first prove an upper bound on the Jensen gap  and discuss the tightness of this bound in Section \ref{subsec:upper-tightness}. Note that our  upper bound is useful even when the function $f$ is not convex.
Next, we show how to use shifts to expand the scope of our upper bound in Section \ref{subsec:upper-shifting}.

The upper bound in the following theorem holds for any probability distribution as long as the relevant moments are well defined.
\begin{theorem}
\label{thm:main-result-upper}If   $f:I\to\mathbb{R}$, where $I$ is a closed subset of $\mathbb{R}$ and $\mu\in I$,
satisfies the following conditions:
\begin{enumerate}
\item $f$ is bounded on any compact subset of $I$.
\item \textup{$\left|f\left(x\right)-f\left(\mu\right)\right|=O\left(\left|x-\mu\right|^{\alpha}\right)$
at $x\to\mu$ for $\alpha>0$ .}
\item $\left|f\left(x\right)\right|=O\left(\left|x\right|^{n}\right)$  as
$x\to\infty$ for $n\geq\alpha$ 
\end{enumerate}
then for a random variable $X$ with probability distribution $\distr$   and expectation $\mu$, the following inequality holds:

\begin{equation}
\left|\E{f\left(X\right)}-f\left(\mu\right)\right|\leq M\left(\sigma_{\alpha}^{\alpha}+\sigma_{n}^{n}\right)\leq M\left(1+\sigma_{n}^{n-\alpha}\right)\sigma_{n}^{\alpha}\label{eq:sigma-alpha-n-bound}
\end{equation}
where $M=\sup_{x\in I\backslash\{\mu\}}\frac{\left|f\left(x\right)-f\left(\mu\right)\right|}{\left|x-\mu\right|^{\alpha}+\left|x-\mu\right|^{n}}$
does not depend on the probability distribution $\mathcal{P}$.
\end{theorem}

\begin{proof}
 We begin by showing that $g\left(x\right)=\frac{\left|f\left(x\right)-f\left(\mu\right)\right|}{\left|x-\mu\right|^{\alpha}+\left|x-\mu\right|^{n}}$
is bounded on $I\backslash\left\{\mu\right\} $:

Since $\left|f\left(x\right)\right|=O\left(\left|x\right|^{n}\right)$
and $\left|x-\mu\right|^{\alpha}+\left|x-\mu\right|^{n}=\Theta\left(\left|x\right|^{n}\right)$
at $x\to\infty$, there exists $d_{1}$ that $g\left(x\right)$ is
bounded on $\left|x-\mu\right|\geq d_{1}$. Also, at $x\to\mu$, since
$\left|f\left(x\right)-f\left(\mu\right)\right|=O\left(\left|x-\mu\right|^{\alpha}\right)$
and $\left|x-\mu\right|^{\alpha}+\left|x-\mu\right|^{n}=\Theta\left(\left|x-\mu\right|^{\alpha}\right)$,
there exists $d_{2}<d_{1}$ such that $g\left(x\right)$ is bounded
on $\left|x-\mu\right|\leq d_{2}$. Since the set $d_{1}\leq\left|x-\mu\right|\leq d_{2}$ is compact, the numerator is bounded on this set, and the denominator is bounded from below by $d_{2}^{\alpha}+d_{2}^{n}$, $g\left(x\right)$
is therefore bounded on $d_{1}\leq\left|x-\mu\right|\leq d_{2}$. In summary,
$g\left(x\right)$ is bounded on $\mathbb{R}\backslash\left\{0\right\}$.

Let $M=\sup_{x\in I\backslash\{\mu\}}\frac{\left|f\left(x\right)-f\left(\mu\right)\right|}{\left|x-\mu\right|^{\alpha}+\left|x-\mu\right|^{n}}$,
we then have:
\[
\left|f\left(x\right)-f\left(\mu\right)\right|=\left(\left|x-\mu\right|^{\alpha}+\left|x-\mu\right|^{n}\right)\cdot g\left(x\right)\leq M\left(\left|x-\mu\right|^{\alpha}+\left|x-\mu\right|^{n}\right)
\]
So the Jensen gap is
\begin{multline*}
\left|\E{f\left(X\right)}-f\left(\E{X}\right)\right|=\int_{\mathbb{R}}\left|f\left(X\right)-f\left(\mu\right)\right|d\mathcal{P}\left(X\right)\\
\leq M\int_{\mathbb{R}}\left|X-\mu\right|^{\alpha}+\left|X-\mu\right|^{n}d\mathcal{P}\left(X\right)\leq M\left(\sigma_{\alpha}^{\alpha}+\sigma_{n}^{n}\right)
\end{multline*}
Also note that $\sigma_{\alpha}\leq\sigma_{n}$ for $\alpha\leq n$,
we then have
\[
M\left(\sigma_{\alpha}^{\alpha}+\sigma_{n}^{n}\right)\leq M\left(1+\sigma_{n}^{n-\alpha}\right)\sigma_{n}^{\alpha}
\]
\end{proof}

If we are only interested in distributions concentrated around $\mu$, we can further simplify the inequality to the corollary below:

\begin{cor}
For functions that satisfy the condition in Theorem \ref{thm:main-result-upper}, there exists a positive number $M'$ independent of the
distribution such that
\begin{equation}
\left|\E{f\left(X\right)}-f\left(\mu\right)\right|\leq M'\sigma_{n}^{\alpha}\label{eq:sigma-alpha-bound}
\end{equation}
for sufficiently small $\sigma_{n}$, 
\end{cor}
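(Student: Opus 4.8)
The plan is to read the corollary off directly from the right-hand inequality in \eqref{eq:sigma-alpha-n-bound}, which already factors out $\sigma_{n}^{\alpha}$ and lumps the remaining distribution dependence into the single prefactor $M\bigl(1+\sigma_{n}^{n-\alpha}\bigr)$. The only work is to pin down what ``sufficiently small $\sigma_{n}$'' should mean and to bound that prefactor by a distribution-independent constant on the corresponding range.

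First I would take the threshold to be $\sigma_{n}\leq 1$. On this range, because condition~3 of Theorem~\ref{thm:main-result-upper} gives $n\geq\alpha$, the exponent $n-\alpha$ is nonnegative, so $\sigma_{n}^{n-\alpha}\leq 1$ (with the convention $\sigma_{n}^{0}=1$ handling the borderline case $n=\alpha$). Hence $1+\sigma_{n}^{n-\alpha}\leq 2$, and \eqref{eq:sigma-alpha-n-bound} becomes $\left|\E{f\left(X\right)}-f\left(\mu\right)\right|\leq 2M\sigma_{n}^{\alpha}$. Setting $M'=2M$ --- finite and positive whenever $f$ is nonconstant, since $M<\infty$ was already established in the proof of Theorem~\ref{thm:main-result-upper}, and independent of $\distr$ for the same reason --- finishes the argument; in the degenerate case $M=0$ the function is constant, the gap vanishes identically, and any positive $M'$ works.

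There is essentially no obstacle here: all the real content lives in Theorem~\ref{thm:main-result-upper}, and the corollary is just the observation that for mean-concentrated distributions the higher-moment correction term $\sigma_{n}^{n-\alpha}$ is itself bounded, so it can be absorbed into the constant. The only point worth stating explicitly is the finiteness and distribution-independence of $M'$, which are inherited verbatim from the supremum defining $M$.
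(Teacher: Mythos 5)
Your proposal is correct and matches the intended argument: the paper leaves this corollary unproved precisely because it follows immediately from the second inequality in \eqref{eq:sigma-alpha-n-bound} by bounding $1+\sigma_{n}^{n-\alpha}\leq 2$ once $\sigma_{n}\leq 1$ (using $n\geq\alpha$) and absorbing the factor into $M'=2M$. Your added remarks on the $n=\alpha$ borderline and the constant-function case are harmless refinements of the same route.
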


\subsection{Tightness of  upper bound\label{subsec:upper-tightness}}

 We show that modulo the preceding constant $M'$, the inequality \ref{eq:sigma-alpha-bound} is sharp.

\begin{prop}\label{prop:upper-sharp}
Let $f\left(x\right)$ be a function that satisfies the condition in Theorem \ref{thm:main-result-upper} with $I=\mathbb{R}$ and has $\left|f\left(x\right)-f\left(\mu\right)\right|\geq M\left|x-\mu\right|^{\alpha}$
 on $x\in\mathbb{R}$ for some $M>0$. Then for any $\sigma_{n}>0$
there exists probability distribution $\mathcal{P}$ that makes 
\[
\left|\E{f\left(X\right)}-f\left(\E{X}\right)\right|\geq M\sigma_{n}^{\alpha}
\]
\end{prop}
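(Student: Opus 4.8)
The plan is to exhibit, for each prescribed value $s=\sigma_n>0$, one explicit witness distribution: the symmetric two-atom distribution $\distr$ with $\distr(\{\mu-s\})=\distr(\{\mu+s\})=\frac12$. This choice is essentially forced: for the upper bound $M(\sigma_\alpha^\alpha+\sigma_n^n)$ of Theorem~\ref{thm:main-result-upper} to be comparable to the claimed lower bound $M\sigma_n^\alpha$ one needs $\sigma_\alpha$ as large as possible given $\sigma_n$, i.e. $\sigma_\alpha=\sigma_n$, which by the power-mean inequality (in the nontrivial case $\alpha<n$, with $\sigma_n>0$) forces $\left|X-\mu\right|$ to be almost surely a positive constant; together with $\E{X}=\mu$ this leaves precisely the two-atom family above. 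So I would commit to this distribution from the outset.

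First I would record the trivial facts about this $\distr$: $\E{X}=\mu$, and $\sigma_p^p=\E{\left|X-\mu\right|^p}=s^p$ for every $p>0$, so in particular $\sigma_n=s$; moreover $\E{f(X)}$ is automatically finite since the distribution is finitely supported, so Conditions 1--3 of Theorem~\ref{thm:main-result-upper} serve only to place $f$ in the right class and are not used in the construction. Then I would expand the gap as
\[
\E{f(X)}-f(\mu)=\frac12\bigl(f(\mu-s)-f(\mu)\bigr)+\frac12\bigl(f(\mu+s)-f(\mu)\bigr),
\]
and apply the hypothesis $\left|f(x)-f(\mu)\right|\ge M\left|x-\mu\right|^\alpha$ at $x=\mu\pm s$ to obtain $\left|f(\mu\pm s)-f(\mu)\right|\ge Ms^\alpha$.

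The one genuine obstacle is cancellation between the two summands: the hypothesis controls only $\left|f(x)-f(\mu)\right|$, so a priori $f(\mu-s)-f(\mu)$ and $f(\mu+s)-f(\mu)$ could have opposite signs and nearly cancel, and this really can occur (for $f(x)=x$ the Jensen gap is identically $0$, so the stated conclusion fails there). I would exclude it exactly as the elementary lower bound in the Introduction does, and as is automatic for convex or concave $f$ after the shift of Section~\ref{subsec:lower-convex}: assume $f-f(\mu)$ does not change sign on $\mathbb{R}$ and, replacing $f$ by $-f$ if necessary (which does not change $\left|\E{f(X)}-f(\mu)\right|$), assume $f(x)\ge f(\mu)$ for all $x$. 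Then both summands above are nonnegative, and therefore
\[
\left|\E{f(X)}-f(\mu)\right|=\frac12\left|f(\mu-s)-f(\mu)\right|+\frac12\left|f(\mu+s)-f(\mu)\right|\ge Ms^\alpha=M\sigma_n^\alpha,
\]
which is the desired inequality; what remains is routine bookkeeping.
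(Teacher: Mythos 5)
Your witness distribution is exactly the paper's: the symmetric two-atom law with mass $\tfrac12$ at each of $\mu\pm\sigma_n$. The difference is in the one computational step. The paper's proof simply asserts $\left|\E{f\left(X\right)}-f\left(\E{X}\right)\right|\geq M\int\left|X-\mu\right|^{\alpha}d\mathcal{P}\left(X\right)$, which amounts to replacing $\left|\E{f\left(X\right)-f\left(\mu\right)}\right|$ by $\E{\left|f\left(X\right)-f\left(\mu\right)\right|}$ --- precisely the cancellation you flag, and the triangle inequality goes the wrong way for that. You are right that the hypothesis $\left|f\left(x\right)-f\left(\mu\right)\right|\geq M\left|x-\mu\right|^{\alpha}$ alone does not rule out cancellation, and that the proposition as literally stated is false: $f\left(x\right)=x$ satisfies all conditions of Theorem~\ref{thm:main-result-upper} with $\alpha=n=1$ and $M=1$, yet its Jensen gap vanishes for every distribution. (The elementary bound \eqref{eq:trivial-lower-bound} in the introduction carries the one-sided sign hypothesis for exactly this reason; it was dropped from the statement of the proposition but is still used in its proof.) Your added assumption that $f-f\left(\mu\right)$ does not change sign --- automatic for convex or concave $f$ after the linear shift of Section~\ref{subsec:lower-convex}, which is the intended use case --- repairs the statement, and under it your computation is complete and correct. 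So: same construction and essentially the same two-line argument as the paper, but your version makes explicit a sign hypothesis that the paper's proof uses tacitly and without which the claim fails.
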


\begin{proof}
Let $\distr$ be discrete with
\[
\mathcal{P}\left(\left\{ \mu+\sigma_n\right\} \right)=\mathcal{P}\left(\left\{ \mu-\sigma_n\right\} \right)=\frac{1}{2}
\]
\[
\mathcal{P}\left(\mathbb{R}\backslash\left\{ \mu+\sigma_n,\mu-\sigma_n\right\} \right)=0
\]
The Jensen gap can then be written as
\[
\left|\E{f\left(X\right)}-f\left(\E{X}\right)\right|\geq M\int\left|X-\mu\right|^{\alpha}d\mathcal{P}\left(X\right)=M\sigma_{n}^{\alpha}
\]
\end{proof}

The following proposition shows that the $\sigma_{n}$ in inequality \eqref{eq:sigma-alpha-bound} cannot be replaced by $\sigma_{\beta}$ for any $\beta<n$:

\begin{prop}
There exists a function $f$ that satisfies the condition in Theorem \ref{thm:main-result-upper}
such that for any $0<\beta<n$ and $\sigma_{n}>0$, there exists a probability distribution $\distr$ that makes
$\frac{\left|\jg\right|}{\sigma_{\beta}{}^{\alpha}}$
arbitrarily large.
\end{prop}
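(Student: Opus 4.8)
The plan is to exhibit a single function together with, for each admissible pair $(\beta,\sigma_{n})$, a one-parameter family of distributions along which the Jensen gap stays pinned at a fixed positive value while $\sigma_{\beta}$ collapses to $0$. Take $f(x)=\left|x\right|^{n}$ on $I=\mathbb{R}$ and $\mu=0$. First I would check that this $f$ satisfies the three hypotheses of \thmref{thm:main-result-upper} for the relevant exponents $\alpha\leq n$: it is continuous, hence bounded on every compact subset; $\left|f(x)-f(0)\right|=\left|x\right|^{n}=O\!\left(\left|x\right|^{\alpha}\right)$ as $x\to0$ because $\alpha\leq n$; and $\left|f(x)\right|=O\!\left(\left|x\right|^{n}\right)$ trivially. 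The key simplification is that for this choice the Jensen gap is \emph{exactly} the $n$-th absolute centered moment: $\jg=\E{\left|X\right|^{n}}-0=\sigma_{n}^{n}$ for every $\distr$ with mean $0$.

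Next, fix $0<\beta<n$ and $\sigma_{n}>0$. For $\varepsilon\in(0,1)$ set $b_{\varepsilon}=\left(\sigma_{n}^{n}/\varepsilon\right)^{1/n}$ and let $\distr_{\varepsilon}$ be the symmetric three-point distribution with $\distr_{\varepsilon}\!\left(\{b_{\varepsilon}\}\right)=\distr_{\varepsilon}\!\left(\{-b_{\varepsilon}\}\right)=\varepsilon/2$ and $\distr_{\varepsilon}\!\left(\{0\}\right)=1-\varepsilon$. By symmetry its mean is $0=\mu$, and by the choice of $b_{\varepsilon}$ we have $\E{\left|X\right|^{n}}=\varepsilon b_{\varepsilon}^{n}=\sigma_{n}^{n}$, so $\distr_{\varepsilon}$ genuinely has the prescribed $n$-th moment and, by the previous paragraph, Jensen gap exactly $\sigma_{n}^{n}$, independent of $\varepsilon$.

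It then remains to compute $\sigma_{\beta}$ and let $\varepsilon\to0^{+}$. We have $\sigma_{\beta}^{\beta}=\E{\left|X\right|^{\beta}}=\varepsilon b_{\varepsilon}^{\beta}=\sigma_{n}^{\beta}\varepsilon^{1-\beta/n}$, hence $\sigma_{\beta}^{\alpha}=\sigma_{n}^{\alpha}\varepsilon^{\alpha(1/\beta-1/n)}$, and therefore
\[
\frac{\left|\jg\right|}{\sigma_{\beta}^{\alpha}}=\frac{\sigma_{n}^{n}}{\sigma_{n}^{\alpha}\varepsilon^{\alpha(1/\beta-1/n)}}=\sigma_{n}^{\,n-\alpha}\,\varepsilon^{-\alpha(1/\beta-1/n)}.
\]
Since $\beta<n$ forces $1/\beta-1/n>0$ and $\alpha>0$, the exponent of $\varepsilon$ is strictly negative, so this ratio tends to $+\infty$ as $\varepsilon\to0^{+}$; given any target value one simply picks $\varepsilon$ small enough. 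I do not expect a genuine obstacle here: the only steps needing care are verifying the hypotheses of \thmref{thm:main-result-upper} for the chosen $f$, and solving for $b_{\varepsilon}$ (rather than fixing it) so that $\sigma_{n}$ remains at its prescribed value while the escaping mass $\varepsilon$ at $\pm b_{\varepsilon}\to\infty$, being weighted by the smaller power $\beta$, drives $\sigma_{\beta}$ to $0$.
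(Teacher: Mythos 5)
Your proposal is correct and follows essentially the same route as the paper: a symmetric three-point distribution with a small mass $\varepsilon$ escaping to $\pm b_{\varepsilon}$, calibrated so that $\sigma_{n}$ stays fixed, which forces $\sigma_{\beta}\to0$ for $\beta<n$ while the gap does not. The only (immaterial) difference is your choice $f(x)=\left|x\right|^{n}$ in place of the paper's $f(x)=\left|x-\mu\right|^{\alpha}+\left|x-\mu\right|^{n}$, which just makes the gap equal to $\sigma_{n}^{n}$ exactly.
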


\begin{proof}
Let $\mathcal{P}$ be  discrete with
\[
\mathcal{P}\left(\left\{ \mu\right\} \right)=1-p
\]
\[
\mathcal{P}\left(\left\{ \mu+a\right\} \right)=\mathcal{P}\left(\left\{ \mu-a\right\} \right)=p/2
\]
\[
\mathcal{P}\left(\mathbb{R}\backslash\left\{ \mu,\mu+a,\mu-a\right\} \right)=0
\]
Then $\sigma_{\beta}$ can be written as
\[
\sigma_{\beta}=\sqrt[\beta]{p}\cdot a
\]
Let $f\left(x\right)=\left|x-\mu\right|^{\alpha}+\left|x-\mu\right|^{n}$.
The absolute value of the Jensen gap can be written as 
\[
\left|\jg\right|=p\cdot\left(a^{\alpha}+a^{n}\right).
\]
Then the ratio
\[
\frac{\left|\jg\right|}{\sigma_\beta^{\alpha}}=p^{1-\frac{\alpha}{\beta}}\cdot\left(1+a^{n-\alpha}\right).
\]
Note that $\sigma_{n}=\sqrt[n]{p}\cdot a$. We then have $a=\frac{\sigma_{n}}{\sqrt[n]{p}}$.
Then we can write the ratio as
\[
\frac{\left|\jg\right|}{\sigma_\beta^{\alpha}}=p^{1-\frac{\alpha}{\beta}}\cdot\left(1+\frac{\sigma_{n}^{n-\alpha}}{p^{1-\frac{\alpha}{n}}}\right)=p^{1-\frac{\alpha}{\beta}}+p^{\alpha\left(\frac{1}{n}-\frac{1}{\beta}\right)}\cdot\sigma_{n}^{n-\alpha}.
\]
Since $\frac{1}{n}-\frac{1}{\beta}<0$ and $p$ can take any value
in $\left(0,1\right)$, it is always possible to make the ratio arbitrarily
large.
\end{proof}

\subsection{Expanding the scope of the upper bound by linear shifts  \label{subsec:upper-shifting}}

When referring to random variables with distribution peaked around its mean, i.e. random variables with small $\sigma_{n}$, the larger the $\alpha$ in inequality \eqref{eq:sigma-alpha-bound}, the tighter the upper bound. However, for many $f$, it is impossible to find an $\alpha>1$.
For example, for functions that are differentiable at $\mu$
and have a $f'\left(\mu\right)\neq0$, the largest $\alpha$ we can
 obtain is $\alpha=1$. Also, for the case of convex functions that are strictly increasing at $x=\mu$, it is impossible to find an $\alpha>1$:

\begin{prop}
Let $f\left(x\right)$ be a convex function that is strictly increasing
near $\mu$. Then for any $\alpha>1$, we have 
\[
\lim_{x\to\mu}\frac{\left|x-\mu\right|^{\alpha}}{f\left(x\right)-f\left(\mu\right)}=0
\]
\label{prop:convex-alpha-1}
\end{prop}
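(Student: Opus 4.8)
The plan is to exploit the key structural fact about a convex function that is strictly increasing near $\mu$: it has a well-defined (possibly one-sided) derivative at $\mu$, and that derivative is strictly positive. More precisely, convexity guarantees that the difference quotient $\frac{f(x)-f(\mu)}{x-\mu}$ is monotone in $x$, so the right-hand derivative $f'_+(\mu) = \lim_{x\to\mu^+}\frac{f(x)-f(\mu)}{x-\mu}$ exists (as a limit of a monotone bounded-below quantity, using that $f$ is finite near $\mu$). Strict monotonicity near $\mu$ forces $f'_+(\mu) \geq c$ for some constant $c>0$; indeed if $f$ is strictly increasing on a neighborhood, then for $x>\mu$ close enough we have $f(x) - f(\mu) > 0$, and by convexity the difference quotient is bounded below by its value at any fixed point to the right, which is a positive constant. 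The same argument applies from the left.

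Next I would turn this into the desired limit. For $x > \mu$ near $\mu$, write
\[
\frac{|x-\mu|^{\alpha}}{f(x)-f(\mu)} = \frac{|x-\mu|^{\alpha}}{(x-\mu)}\cdot\frac{x-\mu}{f(x)-f(\mu)} = |x-\mu|^{\alpha-1}\cdot\frac{1}{\frac{f(x)-f(\mu)}{x-\mu}}.
\]
Since $\alpha > 1$, the factor $|x-\mu|^{\alpha-1}\to 0$ as $x\to\mu$, while the reciprocal difference quotient converges to $1/f'_+(\mu)$, a finite positive number. Hence the product tends to $0$. The argument for $x<\mu$ is symmetric, using $f'_-(\mu)>0$ and the fact that $f(x)-f(\mu) < 0$ there but $|f(x)-f(\mu)| = |x-\mu|\cdot\big|\frac{f(x)-f(\mu)}{x-\mu}\big|$ with the difference quotient bounded away from $0$; since the statement only involves $f(x)-f(\mu)$ in the denominator, I would be a little careful about signs, but near $\mu$ on the left $f(x)-f(\mu)$ is negative and bounded away from $0$ after dividing by $x-\mu$, so the same cancellation works and the two-sided limit is $0$.

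The main obstacle — really the only subtlety — is justifying that the one-sided derivatives at $\mu$ exist and are strictly positive using \emph{only} convexity plus strict monotonicity on a neighborhood, without assuming differentiability. This is where I would invoke the standard lemma that for a convex function the slope $\frac{f(x)-f(\mu)}{x-\mu}$ is nondecreasing in $x$ on either side of $\mu$; combined with finiteness of $f$ near $\mu$ (which follows from condition (1) of Theorem~\ref{thm:main-result-upper}, boundedness on compact subsets), this gives existence of finite one-sided limits. Strict positivity then comes from picking a point $x_0 > \mu$ in the region of strict increase: $\frac{f(x_0)-f(\mu)}{x_0-\mu} > 0$ is a fixed lower bound for all difference quotients with $\mu < x \le x_0$, hence $f'_+(\mu)>0$; symmetrically $f'_-(\mu)>0$. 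Everything else is the routine limit computation above.
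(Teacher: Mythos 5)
Your proposal is correct and follows essentially the same route as the paper's proof: factor the ratio as $\left|x-\mu\right|^{\alpha-1}\cdot\frac{x-\mu}{f\left(x\right)-f\left(\mu\right)}$ and use that the one-sided derivatives $f'_\pm(\mu)$ are strictly positive. The only difference is that you spell out why convexity plus strict monotonicity gives existence and positivity of $f'_\pm(\mu)$, a step the paper asserts without justification.
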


\begin{proof}
Since $f$ is convex and strictly increasing, we have $f'_+(\mu)>0$ and $f'_-(\mu)>0$. So 
\[
\lim_{x\to\mu^+}\frac{\left|x-\mu\right|^{\alpha}}{f\left(x\right)-f\left(\mu\right)}=\lim_{x\to\mu^+}\left(x-\mu\right)^{\alpha-1}\cdot\frac{x-\mu}{f\left(x\right)-f\left(\mu\right)}=0\cdot\frac{1}{f'_+(\mu)}=0
\]
Same argument holds for $x\to\mu^-$.
\end{proof}

Although the inability to get an $\alpha>1$ seems to be a major limitation, fortunately for most cases we can eliminate this limitation by shifting the function by a linear function, because  this does not change its convexity or the Jensen gap. For functions that are differentiable at $x=\mu$, from Taylor's theorem with Peano's form of remainder, we know that
\[
f\left(x\right)=f\left(\mu\right)+f'\left(\mu\right)\left(x-\mu\right)+o\left(x-\mu\right)
\]
We can therefore study $g\left(x\right)=f\left(x\right)-f'\left(\mu\right)\left(x-\mu\right)$
instead of $f\left(x\right)$. We will then have $g\left(x\right)-g\left(\mu\right)=o\left(x-\mu\right)$,
which has an $\alpha$ value at least as large as $f\left(x\right)$.
If further $f\left(x\right)$ has well defined second derivative, we then have
\[
f\left(x\right)=f\left(\mu\right)+f'\left(\mu\right)\left(x-\mu\right)+\frac{f^{''}\left(\xi_{L}\right)}{2}\left(x-\mu\right)^{2}
\]
that is 
\begin{equation}
g\left(x\right)-g\left(\mu\right)=\frac{f^{''}\left(\xi_{L}\right)}{2}\left(x-\mu\right)^{2}\label{eq:gx}
\end{equation}
which implies $\alpha=2$. If $f''(\mu)=0$, we can apply similar arguments to higher order derivatives to find the best $\alpha$.

Note that if we define $h\left(x;\mu\right)\equiv\frac{f''\left(\xi_{L}\right)}{2}\equiv\frac{f\left(x\right)-f\left(\mu\right)-f'\left(\mu\right)\left(x-\mu\right)}{\left(x-\mu\right)^{2}}$,
then \eqref{eq:gx} can be written as $g\left(x\right)-g\left(\mu\right)=h\left(x;\mu\right)\left(x-\mu\right)^{2}$,
which further gives
\begin{equation}
\inf_{x}h\left(x;\mu\right)\cdot\mathrm{Var}\left[X\right]\leq\J{f}\leq\sup_{x}h\left(x;\mu\right)\cdot\mathrm{Var}\left[X\right]\label{eq:liaoresult}
\end{equation}
as shown in\cite{liao2017sharpening}. If $\left|f\left(x\right)\right|\neq O\left(x^{2}\right)$
at $x\to\infty$, then $\sup_{x}h\left(x;\mu\right)=+\infty$ or $\inf_{x}h\left(x;\mu\right)=-\infty$
or both, which means at least half of \eqref{eq:liaoresult} will become
a trivial inequality $-\infty\leq\J{f}$ or $\J{f}\leq+\infty$.
On the other hand, if $\left|f\left(x\right)\right|=O\left(x^{2}\right)$
at $x\to\infty$, we then have $n=2$. If this is the case, the preceding
constant $M$ in Theorem \ref{thm:main-result-upper} can then be written as $M=\frac{1}{2}\sup_{x}\left|h\left(x;\mu\right)\right|$
and Equation (2.1) therefore becomes $-\sup_{x}\left|h\left(x;\mu\right)\right|\cdot\sigma_{2}^{2}\leq\mathcal{J}\leq\sup_{x}\left|h\left(x;\mu\right)\right|\cdot\sigma_{2}^{2}$,
which is equivalent to \eqref{eq:liaoresult} in half or in full$^2$\footnotemark[2]\footnotetext[2]{$^2$If $\left|\sup_{x}h\left(x;\mu\right)\right|>\left|\inf_{x}h\left(x;\mu\right)\right|$,
then we must have $\sup_{x}\left|h\left(x;\mu\right)\right|=\sup_{x}h\left(x;\mu\right)$,
which means the $\J{f}\leq$ part of Theorem \ref{thm:main-result-upper} and of \eqref{eq:liaoresult}
are equivalent. If $\left|\sup_{x}h\left(x;\mu\right)\right|<\left|\inf_{x}h\left(x;\mu\right)\right|$,
then we must have $-\sup_{x}\left|h\left(x;\mu\right)\right|=\inf_{x}h\left(x;\mu\right)$,
which means the $\leq\J{f}$ part of Theorem \ref{thm:main-result-upper} and of \eqref{eq:liaoresult}
are equivalent. If $\left|\sup_{x}h\left(x;\mu\right)\right|=\left|\inf_{x}h\left(x;\mu\right)\right|$
and $h\left(x;\mu\right)$ is not constant, then we must have $\sup_{x}\left|h\left(x;\mu\right)\right|=-\inf_{x}h\left(x;\mu\right)=\sup_{x}h\left(x;\mu\right)$,
hence in this case, Theorem \ref{thm:main-result-upper} is fully equivalent to \eqref{eq:liaoresult}.}.
Due to these connections, Lemma 1 in \cite{liao2017sharpening}
gives a convenient way to compute the $M$ in equation \eqref{eq:sigma-alpha-n-bound} when the $f'\left(x\right)$ is convex or concave.

\section{Second Main Result: Lower bound}
\label{subsec:lower-main} 
 We first prove our lower bound  for conditions similar to the upper bound case.  The tightness of this bound will be discussed in Section \ref{subsec:lower-tight} followed, in Section \ref{subsec:lower-convex}, by strong implications for convex functions, and expanding the scope via linear function shifts.

 The lower bound  given in the following theorem holds for any probability distribution as long as the relevant moments are well-defined.

\begin{theorem}\label{thm:main-result-lower}
If function $f:I\to\mathbb{R}$, where $I$ is a closed subset of $\mathbb{R}$ and $\mu\in I$, satisfies the following conditions:
\begin{enumerate}
\item $f(x)-f(\mu)>0$ at $x\neq\mu$
\item $f\left(x\right)-f\left(\mu\right)=\Omega\left(\left|x-\mu\right|^{\alpha}\right)$
at $x\to\mu$ for $\alpha>0$
\item $f\left(x\right)-f\left(\mu\right)=\Omega\left(\left|x-\mu\right|^{\beta}\right)$
at $x\to\infty$ for $0\leq\beta\leq\alpha$
\end{enumerate}
then  for random variable $X$ with probability distribution $\distr$ that has expectation $\mu$, the following inequality holds:
\begin{equation}
\jg\geq M\frac{\sigma_{\alpha/2}^{\alpha}}{1+\sigma_{\alpha-\beta}^{\alpha-\beta}}\label{eq:cauchy-schwartz-lower-bound}
\end{equation}
where $M=\inf_{x\in I\backslash \{\mu\}}\left\{ \left[f\left(x\right)-f\left(\mu\right)\right]\cdot\left(\frac{1}{\left|X-\mu\right|^{\beta}}+\frac{1}{\left|X-\mu\right|^{\alpha}}\right)\right\}>0$ does not depend on the probability distribution $\distr$.
\end{theorem}

\begin{proof}
Let 
\[
g\left(x\right)=\left(\frac{1}{\left|x-\mu\right|^{\beta}}+\frac{1}{\left|x-\mu\right|^{\alpha}}\right)^{-1}
\]
from the definition of $M$, we know that $f\left(x\right)-f\left(\mu\right)\geq M\cdot g(x)$.

 We first prove that $M>0$.  It is easy to see that $g\left(x\right)$ is positive at $x\neq\mu$, $g\left(x\right)=\Theta\left(\left|x-\mu\right|^{\alpha}\right)$ at $x\to\mu$,
and $g\left(x\right)=\Theta\left(\left|x-\mu\right|^{\beta}\right)$ at $x\to\infty$.
Therefore, there exists positive  $M_{1}$, $M_{2}$ and $d_{1}\leq d_{2}$
such that $f\left(x\right)-f\left(\mu\right)\geq M_{1}\cdot g\left(x\right)$
at $\left|x-\mu\right|\leq d_{1}$ and $f\left(x\right)-f\left(\mu\right)\geq M_{2}\cdot g\left(x\right)$
at $\left|x-\mu\right|\geq d_{2}$. Since $d_{1}\leq\left|x-\mu\right|\leq d_{2}$ is compact and both $f\left(x\right)-f\left(\mu\right)$ and $g\left(x\right)$
are positive in this interval, there exists $M_{3}>0$ such that $f\left(x\right)-f\left(\mu\right)\geq M_{3}\cdot g\left(x\right)$.
Taking $M'=\min\left\{ M_{1},M_{2},M_{3}\right\}>0$, we have $f\left(x\right)-f\left(\mu\right)\geq M'g\left(x\right)$.
That is, $\frac{f\left(x\right)-f\left(\mu\right)}{g(x)}$ is bounded from below by some positive number. Therefore
\[
M=\inf_{x\in I\backslash \{\mu\}}\frac{f\left(x\right)-f\left(\mu\right)}{g(x)}>0
\]

Since $f\left(x\right)-f\left(\mu\right)\geq M\cdot g\left(x\right)$, we have
\begin{multline}
\jg\geq M\int\left(\frac{1}{\left|X-\mu\right|^{\beta}}+\frac{1}{\left|X-\mu\right|^{\alpha}}\right)^{-1}d\mathcal{P}\left(X\right)\\
=M\int\frac{\left|X-\mu\right|^{\alpha}}{\left|X-\mu\right|^{\alpha-\beta}+1}d\mathcal{P}\left(X\right)\label{eq:ineq-int}
\end{multline}
The Cauchy\textendash Schwarz
inequality can be used to simplify the above inequality: Let 
\[
g_{1}\left(X\right)=\sqrt{\frac{\left|X-\mu\right|^{\alpha}}{\left|X-\mu\right|^{\alpha-\beta}+1}}
\]
and
\[
g_{2}\left(X\right)=\sqrt{\left|X-\mu\right|^{\alpha-\beta}+1}
\]
Cauchy\textendash Schwarz inequality  
\[
\E{g_{1}^{2}\left(X\right)}\E{g_{2}^{2}\left(X\right)}\geq\E{g_{1}\left(X\right)g_{2}\left(X\right)}^{2}
\]
  can be rewritten as
\[
\E{g_{1}^{2}\left(X\right)}\geq\frac{\E{g_{1}\left(X\right)g_{2}\left(X\right)}^{2}}{\E{g_{2}^{2}\left(X\right)}}
\]
Note that
\[
\E{g_{1}^{2}\left(X\right)}=\int\frac{\left|X-\mu\right|^{\alpha}}{\left|X-\mu\right|^{\alpha-\beta}+1}d\mathcal{P}\left(X\right)
\]
\[
\E{g_{2}^{2}\left(X\right)}=\int\left(\left|X-\mu\right|^{\alpha-\beta}+1\right)d\mathcal{P}\left(X\right)=1+\sigma_{\alpha-\beta}^{\alpha-\beta}
\]
\[
\E{g_{1}\left(X\right)g_{2}\left(X\right)}^{2}=\left(\int\left|X-\mu\right|^{\alpha/2}d\mathcal{P}\left(X\right)\right)^{2}=\sigma_{\alpha/2}^{\alpha}
\]
We therefore have
\[
\int\frac{\left|X-\mu\right|^{\alpha}}{\left|X-\mu\right|^{\alpha-\beta}+1}d\mathcal{P}\left(X\right)\geq\frac{\sigma_{\alpha/2}^{\alpha}}{1+\sigma_{\alpha-\beta}^{\alpha-\beta}}
\]
Plugging into \eqref{eq:ineq-int}, we have
\begin{equation*}
\jg\geq M\frac{\sigma_{\alpha/2}^{\alpha}}{1+\sigma_{\alpha-\beta}^{\alpha-\beta}}
\end{equation*}
\end{proof}
Note that if we replace all $f(x)-f(\mu)$ with $f(\mu)-f(x)$, and at the same time replace $\jg$ with $-\jg$, Theorem \ref{thm:main-result-lower} still holds. Also note that by replacing the Cauchy\textendash Schwarz inequality with
H\"older's inequality in the proof of the above theorem, we can get
a more general but less pleasing result:
\begin{theorem}
The inequality \eqref{eq:cauchy-schwartz-lower-bound} in Theorem \ref{thm:main-result-lower}
can be replaced by the following inequality:
\begin{equation}\label{eq:lower-bound-holder}
\jg\geq M\frac{\left[\sum_{l=0}^{\left(k+1\right)/q-1}\left(\begin{array}{c}
\left(k+1\right)/q-1\\
l
\end{array}\right)\sigma_{\alpha/p+l\left(\alpha-\beta\right)}^{\alpha/p+l\left(\alpha-\beta\right)}\right]^{p}}{\left[\sum_{l=0}^{k}\left(\begin{array}{c}
k\\
l
\end{array}\right)\sigma_{l\left(\alpha-\beta\right)}^{l\left(\alpha-\beta\right)}\right]^{p/q}}
\end{equation}
where $k\geq1$ is an integer, $q$ can be any positive factor of
$\left(k+1\right)$ except $1$, and $p=\frac{q}{q-1}$.
\end{theorem}
\begin{proof}
Following the same steps as in the Cauchy\textendash Schwarz version,
but this time introducing a new integral parameter $k\geq1$, and setting
\[
g_{1}\left(X\right)=\sqrt[p]{\frac{\left|X-\mu\right|^{\alpha}}{\left|X-\mu\right|^{\alpha-\beta}+1}}
\]
and
\[
g_{2}\left(X\right)=\sqrt[q]{\left(\left|X-\mu\right|^{\alpha-\beta}+1\right)^{k}}
\]
we have
\[
\E{g_{1}^{p}\left(X\right)}=\int\frac{\left|X-\mu\right|^{\alpha}}{\left|X-\mu\right|^{\alpha-\beta}+1}d\mathcal{P}\left(X\right)
\]
\[
\E{g_{2}^{q}\left(X\right)}=\int\left(\left|X-\mu\right|^{\alpha-\beta}+1\right)^{k}d\mathcal{P}\left(X\right)=\sum_{l=0}^{k}\left(\begin{array}{c}
k\\
l
\end{array}\right)\sigma_{l\left(\alpha-\beta\right)}^{l\left(\alpha-\beta\right)}
\]
\begin{multline*}
\E{g_{1}\left(X\right)g_{2}\left(X\right)}=\int\left|X-\mu\right|^{\alpha/p}\left(\left|X-\mu\right|^{\alpha-\beta}+1\right)^{k/q-1/p}d\mathcal{P}\left(X\right)\\
=\int\left|X-\mu\right|^{\alpha/p}\left(\left|X-\mu\right|^{\alpha-\beta}+1\right)^{\left(k+1\right)/q-1}d\mathcal{P}\left(X\right)\\
=\sum_{l=0}^{\left(k+1\right)/q-1}\left(\begin{array}{c}
\left(k+1\right)/q-1\\
l
\end{array}\right)\sigma_{\alpha/p+l\left(\alpha-\beta\right)}^{\alpha/p+l\left(\alpha-\beta\right)}.
\end{multline*}
From H\"older's inequality, we know that
\[
\E{g_{1}^{p}\left(X\right)}\geq\frac{\E{g_{1}\left(X\right)g_{2}\left(X\right)}^{p}}{\E{g_{2}^{q}\left(X\right)}^{\frac{p}{q}}}=\frac{\left[\sum_{l=0}^{\left(k+1\right)/q-1}\left(\begin{array}{c}
\left(k+1\right)/q-1\\
l
\end{array}\right)\sigma_{\alpha/p+l\left(\alpha-\beta\right)}^{\alpha/p+l\left(\alpha-\beta\right)}\right]^{p}}{\left[\sum_{l=0}^{k}\left(\begin{array}{c}
k\\
l
\end{array}\right)\sigma_{l\left(\alpha-\beta\right)}^{l\left(\alpha-\beta\right)}\right]^{p/q}}
\]
which immediately yields inequality \eqref{eq:lower-bound-holder}.
\end{proof}

Although general, inequality \eqref{eq:lower-bound-holder} is too cumbersome to be useful. To simplify it, we can take $q=k+1$ and therefore $p=1+\frac{1}{k}$. We then have
\begin{equation}
\label{eq:lower-bound-holder-special-q}
\jg\geq M\frac{\sigma_{\alpha/\left(1+\frac{1}{k}\right)}^{\alpha}}{\left[\sum_{l=0}^{k}\left(\begin{array}{c}
k\\
l
\end{array}\right)\sigma_{l\left(\alpha-\beta\right)}^{l\left(\alpha-\beta\right)}\right]^{1/k}}
\end{equation}

Note that applying inequality \eqref{eq:lower-bound-holder-special-q} to $f(x)=\left|x\right|^\alpha$, we obtain
\begin{equation}
\label{eq:lower-bound-special}
\sigma_\alpha\geq\sigma_{\alpha/\left(1+\frac{1}{k}\right)}
\end{equation}
which is a special case of the inequality 
\[
\E{\left|X\right|^{r}}\leq\E{\left|X\right|^{s}}^{\frac{r}{s}}
\]
for $0<r<s$.

\subsection{Tightness of the lower bound\label{subsec:lower-tight}}

 Since inequality \eqref{eq:lower-bound-special} is a special case of \eqref{eq:lower-bound-holder-special-q} and since \eqref{eq:lower-bound-special} is sharp, it follows that \eqref{eq:lower-bound-holder-special-q} is sharp. 
 
In inequality \eqref{eq:lower-bound-holder-special-q}, as the centered absolute moments decrease to $0$, since the denominator  decreases to $1$, it is the numerator that characterizes how fast the Jensen gap decreases to $0$.
Since $\sigma_r\leq\sigma_s$ for $r\leq s$, having a larger subscript in the numerator means a tighter result.
In \eqref{eq:lower-bound-holder-special-q}, the subscript of the numerator can be increased to a value arbitrarily close to $\alpha$ by choosing larger $k$, but as a side effect, this also brings higher orders of moments into the denominator.

 Therefore, a natural question is whether we can increase the subscript of the numerator of \eqref{eq:lower-bound-holder-special-q} without bringing in higher orders of moments? The following proposition shows that the answer is no, by showing that if we increase the subscript higher than that proposed in \eqref{eq:lower-bound-holder-special-q}, it is possible to construct a sequence of probability distributions that make the moments in the denominator decrease to 0 while at the same time making the ratio between the Jensen gap and the numerator go to zero (therefore making it impossible to find a $M$ to make the $\geq$ in \eqref{eq:lower-bound-holder-special-q} hold):

\begin{prop}
Let $f(x)=\Theta(\left|x\right|^\beta)$ at $x\to\infty$ be a function that satisfies the condition in Theorem \ref{thm:main-result-lower}. Then for any $q>\alpha/\left(1+\frac{1}{k}\right)$, there exists a sequence of probability  distributions $\mathcal{P}^{(1)}, \mathcal{P}^{(2)}, \ldots$ such that $\sigma_{r}^{(j)}$ is non-increasing with respect to $j$ for all $r\leq k(\alpha-\beta)$ and 
\[
\lim_{j\to+\infty}\frac{\mathcal{J}\left(f,X\sim\mathcal{P}^{(j)}\right)}{\left[\sigma_q^{(j)}\right]^\alpha}=0
\]
\end{prop}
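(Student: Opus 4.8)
The plan is to exhibit an explicit family of three-point distributions analogous to those used in the preceding propositions, but with the mass on the outer points tending to zero at a carefully chosen rate relative to the location of those points. Concretely, I would take $\mathcal{P}^{(j)}$ supported on $\{\mu, \mu+a_j, \mu-a_j\}$ with $\mathcal{P}^{(j)}(\{\mu\})=1-p_j$ and $\mathcal{P}^{(j)}(\{\mu\pm a_j\})=p_j/2$, letting $a_j\to\infty$ and $p_j\to 0$. For such a distribution $\sigma_r^{(j)}=p_j^{1/r}a_j$ for every $r>0$, so all moments are driven by the single quantity $p_j^{1/r}a_j$; the monotonicity requirement on $\sigma_r^{(j)}$ for $r\le k(\alpha-\beta)$ becomes a monotonicity requirement on $p_j^{1/r}a_j$, which (since $p_j<1$ makes $p_j^{1/r}$ smallest for the smallest $r$) is controlled by the $r\to 0^+$ behavior, and can be arranged by choosing $p_j$ to decay fast enough relative to the growth of $a_j$.

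The core computation is to evaluate the ratio. Using $f(x)=\Theta(|x|^\beta)$ at infinity, for $a_j$ large we have $\mathcal{J}(f,X\sim\mathcal{P}^{(j)}) = p_j\,[f(\mu+a_j)+f(\mu-a_j)]/2 - $ (the $x=\mu$ term contributes nothing since $f(\mu)-f(\mu)=0$ after recentering in the definition of the gap) $= \Theta(p_j a_j^\beta)$. Meanwhile the denominator is $[\sigma_q^{(j)}]^\alpha = p_j^{\alpha/q} a_j^\alpha$. So the ratio is $\Theta\!\left(p_j^{\,1-\alpha/q}\, a_j^{\,\beta-\alpha}\right)$. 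Since $\beta\le\alpha$, the factor $a_j^{\beta-\alpha}$ is bounded (non-increasing in $a_j$), so it suffices to make $p_j^{\,1-\alpha/q}\to 0$ when $1-\alpha/q>0$, i.e. when $q>\alpha$; but the hypothesis is only $q>\alpha/(1+\frac1k)$, which may be less than $\alpha$, so when $\alpha/(1+\frac1k)<q\le\alpha$ the exponent $1-\alpha/q\le 0$ and $p_j^{1-\alpha/q}$ does \emph{not} go to zero on its own — one must also use the $a_j^{\beta-\alpha}\to 0$ factor, choosing $a_j\to\infty$ fast enough (relative to $p_j\to 0$) that the product still tends to $0$ while the moment-monotonicity constraint still holds. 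Balancing these two competing requirements — $a_j$ large enough to kill the ratio, $p_j$ small enough (given $a_j$) to keep all the low-order moments non-increasing — is the delicate point, and it is where the precise threshold $q>\alpha/(1+\frac1k)$ (equivalently $\alpha/q < 1+\frac1k$, i.e. $\alpha/q-1<\frac1k$) enters: it guarantees enough room between the exponents $1-\alpha/q$ and the available decay budget to make a consistent choice.

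Accordingly the steps, in order, are: (1) set up the three-point family and record $\sigma_r^{(j)}=p_j^{1/r}a_j$; (2) translate the monotonicity condition "$\sigma_r^{(j)}$ non-increasing for all $r\le k(\alpha-\beta)$" into a single condition on the sequences $(p_j)$, $(a_j)$, noting the binding case is the smallest relevant $r$; (3) compute, using condition~1 ($f(x)-f(\mu)>0$ so the gap is a genuine positive sum) and the $\Theta(|x|^\beta)$ growth, that the gap is $\Theta(p_j a_j^\beta)$ and hence the target ratio is $\Theta(p_j^{\,1-\alpha/q} a_j^{\,\beta-\alpha})$; (4) choose explicit sequences — for instance $a_j=j$ and $p_j = j^{-\gamma}$ for a suitable large $\gamma=\gamma(\alpha,\beta,k,q)$ — verifying both that the ratio in (3) tends to $0$ and that the monotonicity constraint from (2) is met. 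The main obstacle is precisely step (4): showing a single pair of sequences simultaneously satisfies the monotonicity of all the low-order moments and forces the ratio to zero, which requires carefully choosing the decay rate of $p_j$ against the growth rate of $a_j$ so that the exponent budget permitted by $q>\alpha/(1+\frac1k)$ is not exceeded.
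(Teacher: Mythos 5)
Your construction is essentially the paper's: a three-point distribution with mass $p_j$ split between $\mu\pm a_j$, with $a_j\to\infty$ and $p_j\to 0$; the paper takes $a_j=j$ and $p_j=j^{-m}$ with $m=k(\alpha-\beta)$, i.e.\ exactly the $\gamma=k(\alpha-\beta)$ instance of your step (4). Two corrections, one of which closes the gap you left open. First, your claim that the monotonicity constraint is ``controlled by the $r\to 0^+$ behavior'' is backwards: $\sigma_r^{(j)}=p_j^{1/r}a_j=j^{1-\gamma/r}$ is non-increasing iff $r\leq\gamma$, and since $p_j^{1/r}$ decays \emph{faster} for smaller $r$, the binding case is the \emph{largest} relevant $r$, namely $r=k(\alpha-\beta)$; this forces $\gamma\geq k(\alpha-\beta)$. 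Second, the ``delicate balancing'' you defer is a one-line check once you take $\gamma$ at that minimum: the ratio has exponent $\gamma\left(\frac{\alpha}{q}-1\right)+(\beta-\alpha)$, which at $\gamma=k(\alpha-\beta)$ equals $(\alpha-\beta)\left[\frac{\alpha k}{q}-(k+1)\right]$, and this is negative precisely when $q>\alpha k/(k+1)=\alpha/\left(1+\frac{1}{k}\right)$ (assuming $\alpha>\beta$, as the statement implicitly requires for the range of $r$ to be nontrivial). So there is no residual tension between the two requirements: the moment constraint pins $\gamma$ from below, and the hypothesis on $q$ guarantees that this minimal $\gamma$ already sends the ratio to zero. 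With those two points supplied, your argument is complete and coincides with the paper's.
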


\begin{proof}
Let $m=k(\alpha-\beta)$. Without loss of generality,  assume $\mu=0$, $f(x)$ is even, and $f(0)=0$. 
Let $\mathcal{P}$ be a discrete probability distribution that has
\[
\mathcal{P}\left(\left\{j\right\} \right)=\mathcal{P}\left(\left\{ -j\right\} \right)=\frac{1}{2j^m}
\]
\[
\mathcal{P}\left(\left\{0\right\}\right)=1-\frac{1}{j^m}
\]
\[
\mathcal{P}\left(\mathbb{R}\backslash\left\{ 0,\pm j\right\} \right)=0
\]
It is easy to see that
\[
\sigma_r^{(j)}=j^{1-\frac{m}{r}}
\]
does not increase as $j$ increases for $r\leq m$, and
\[
\mathcal{J}\left(f,X\sim\mathcal{P}^{(j)}\right)=\frac{f(j)}{j^m}=\Theta\left(j^{\beta-m}\right)
\]
at $j\to+\infty$. We then have
\[
\frac{\mathcal{J}\left(f,X\sim\mathcal{P}^{(j)}\right)}{\left[\sigma_q^{(j)}\right]^\alpha}=\Theta\left[j^{\beta-m-\alpha\left(1-\frac{m}{q}\right)}\right]
\]
In the case that $q>\alpha/\left(1+\frac{1}{k}\right)$, we have $\beta-m-\alpha\left(1-\frac{m}{q}\right)<0$. Therefore
\[
\Theta\left[j^{\beta-m-\alpha\left(1-\frac{m}{q}\right)}\right]\to0
\]
as $j\to+\infty$.
\end{proof}

\subsection{The lower bound for convex functions\label{subsec:lower-convex}}

The conditions for Theorem \ref{thm:main-result-lower} are hard for a general function to satisfy. In fact, Jensen's inequality only holds for convex functions, so it is not surprising that we are unable to obtain a lower bound of the Jensen gap. In this section, we show that convexity implies the conditions in Theorem \ref{thm:main-result-lower}. The argument in this section also applies to concave functions. 

In order for the condition in Theorem \ref{thm:main-result-lower}  to be satisfied, a convex function $f(x)$ needs to be non-increasing at $\left(-\infty,\mu\right]$ and non-decreasing at $\left[\mu,+\infty\right)$. The following proposition shows that  it is always possible to shift a convex function by a linear function to make it so.

\begin{prop}
For any convex function $f\left(x\right)$, and any real number $a$ satisfying $f'_-(\mu)\leq a\leq f'_+(\mu)$, the linear shift $g\left(x\right)=f\left(x\right)-a\left(x-\mu\right)$ is
non-increasing at $\left(-\infty,\mu\right]$ and non-decreasing at
$\left[\mu,+\infty\right)$. Specially, if $f\left(x\right)$ is differentiable
at $\mu$, $a$ is unique and given by $a=f'\left(x\right)$.
\end{prop}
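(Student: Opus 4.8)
The plan is to exploit the subgradient characterization of convexity. For a convex function $f$ and the chosen slope $a\in[f'_-(\mu),f'_+(\mu)]$, the line $\ell(x)=f(\mu)+a(x-\mu)$ is a supporting line at $\mu$, i.e.\ $f(x)\geq f(\mu)+a(x-\mu)$ for all $x$; equivalently $g(x)=f(x)-a(x-\mu)\geq f(\mu)=g(\mu)$. That already says $g$ has a global minimum at $\mu$, but I want the sharper monotonicity statement. The cleanest route is to note that $g$ is itself convex (subtracting a linear function preserves convexity), so its one-sided derivatives $g'_\pm$ exist everywhere and are non-decreasing. By the standard rule for derivatives of sums, $g'_\pm(x)=f'_\pm(x)-a$. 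At $x=\mu$ the condition $f'_-(\mu)\leq a\leq f'_+(\mu)$ gives $g'_-(\mu)\leq 0\leq g'_+(\mu)$.

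First I would record the monotonicity of one-sided derivatives of a convex function: for $x<\mu$ one has $g'_+(x)\leq g'_-(\mu)\leq 0$, and for $x>\mu$ one has $g'_-(x)\geq g'_+(\mu)\geq 0$. Then I would invoke the elementary fact that a function whose (one-sided) derivative is $\leq 0$ throughout an interval is non-increasing there, and likewise $\geq 0$ implies non-decreasing; this applies on $(-\infty,\mu]$ and $[\mu,+\infty)$ respectively, giving exactly the claimed behavior of $g$. Alternatively, and perhaps more self-contained, I would argue directly from the three-slope (monotone difference quotient) inequality for convex functions: for $x_1<x_2\leq\mu$ the slope $\frac{g(x_2)-g(x_1)}{x_2-x_1}\leq g'_-(\mu)\leq 0$, hence $g(x_1)\geq g(x_2)$; symmetrically on the right.

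For the last sentence: if $f$ is differentiable at $\mu$ then $f'_-(\mu)=f'_+(\mu)=f'(\mu)$, so the interval of admissible slopes collapses to the single point $a=f'(\mu)$, and uniqueness is immediate. (I would write $a=f'(\mu)$ rather than the excerpt's apparent typo $a=f'(x)$.)

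The main obstacle is merely bookkeeping around one-sided derivatives at the non-differentiable points of a general convex function — making sure the inequalities $g'_+(x)\leq g'_-(\mu)$ for $x<\mu$ are the right ones and are applied on half-open intervals so that the endpoint $\mu$ is included in both pieces. There is no analytic difficulty: everything follows from convexity being stable under linear shifts together with the monotonicity of difference quotients, so I would lean on the difference-quotient formulation to keep the argument short and avoid any appeal to differentiability of $f$ away from $\mu$.
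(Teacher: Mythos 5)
Your proposal is correct, and your ``alternative'' difference-quotient argument is essentially verbatim the paper's own proof (the paper bounds $\frac{g(x')-g(x)}{x'-x}=\frac{f(x')-f(x)}{x'-x}-a\geq f'_+(\mu)-a\geq 0$ for $\mu\leq x<x'$ and argues symmetrically on the left). Your primary route via monotonicity of the one-sided derivatives of the convex function $g$ is just a repackaging of the same three-slope fact, and you are right that $a=f'(x)$ in the statement should read $a=f'(\mu)$.
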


\begin{proof}
For $\mu \leq x < x'$, we have
\[
\frac{g(x')-g(x)}{x'-x} = \frac{f(x')-f(x)}{x'-x} - a \geq f'_+(\mu)-a\geq0
\]
That is, $g(x')\geq g(x)$. Similar argument applies to the $x\leq\mu$ half of $g(x)$.
\end{proof}

The convexity also implies that $\alpha$ can only  take values from $\left[1,+\infty\right)$, as shown in the following proposition:

\begin{prop}
There does not exist any convex function that has $f\left(x\right)-f\left(\mu\right)=\Omega\left(\left|x-\mu\right|^{\alpha}\right)$
at $x\to\mu$ with $\alpha<1$.
\end{prop}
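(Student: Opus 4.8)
The plan is to argue by contradiction using the fact that a convex function lies above each of its supporting lines. Suppose a convex function $f$ satisfied $f(x)-f(\mu)=\Omega(|x-\mu|^{\alpha})$ at $x\to\mu$ for some $\alpha<1$; that is, there exist constants $c>0$ and $\delta>0$ such that $f(x)-f(\mu)\geq c|x-\mu|^{\alpha}$ for all $x$ with $0<|x-\mu|\leq\delta$. The key tension is that $|x-\mu|^{\alpha}$ with $\alpha<1$ grows \emph{faster} than any linear function as $x\to\mu$ (its difference quotient $|x-\mu|^{\alpha-1}\to+\infty$), whereas convexity forces $f(x)-f(\mu)$ to be bounded \emph{below} by a linear function near $\mu$, hence to be $O(|x-\mu|)$ on at least one side.

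First I would pin down the linear lower bound coming from convexity. A convex function on $\mathbb{R}$ has finite one-sided derivatives $f'_-(\mu)$ and $f'_+(\mu)$ at the interior point $\mu$, and for every $x$ we have $f(x)-f(\mu)\geq f'_+(\mu)(x-\mu)$ when $x>\mu$ and $f(x)-f(\mu)\geq f'_-(\mu)(x-\mu)$ when $x<\mu$ (the supporting-line inequality). Actually what I want is an \emph{upper} bound on $f(x)-f(\mu)$ on one side; the cleanest route is to use monotonicity of difference quotients. Fix a point $x_0>\mu$ with $x_0-\mu\leq\delta$. For $\mu<x<x_0$, convexity gives
\[
\frac{f(x)-f(\mu)}{x-\mu}\leq\frac{f(x_0)-f(\mu)}{x_0-\mu}=:K,
\]
so $f(x)-f(\mu)\leq K(x-\mu)$ for all such $x$. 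If $K\leq 0$ one can instead work on the left side of $\mu$ by a symmetric argument, or simply note the hypothesis forces $f(x)-f(\mu)>0$ near $\mu$ so $K>0$; either way we obtain, on at least one side of $\mu$, a bound of the form $f(x)-f(\mu)\leq K|x-\mu|$ with $K>0$ for $0<|x-\mu|\leq\delta$.

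Combining the two bounds on that side: for $0<|x-\mu|\leq\delta$ we would have $c|x-\mu|^{\alpha}\leq f(x)-f(\mu)\leq K|x-\mu|$, hence $c\leq K|x-\mu|^{1-\alpha}$. Letting $x\to\mu$ and using $1-\alpha>0$ gives $c\leq 0$, contradicting $c>0$. This completes the argument.

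The only real subtlety — the step I would be most careful about — is making sure the one-sided difference-quotient bound is available on a side where $f(x)-f(\mu)$ is actually positive, so that the $\Omega$-hypothesis and the linear upper bound genuinely collide rather than trivially coexisting (e.g., if $f(x)-f(\mu)$ were negative on one side the inequality $c|x-\mu|^\alpha \le K|x-\mu|$ could fail to be the relevant comparison). Since condition (1)-style positivity (or at least the $\Omega$ lower bound with positive constant) guarantees $f(x)-f(\mu)>0$ for $x$ near $\mu$ on both sides, and monotonicity of difference quotients is a standard property of convex functions, this is handled, but it is worth stating explicitly. Everything else is a routine limit computation.
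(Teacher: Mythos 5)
Your argument is correct and is essentially the paper's own proof: both rest on the monotonicity of the convex difference quotient $\frac{f(x)-f(\mu)}{x-\mu}$, which bounds it above near $\mu^{+}$ by a constant $K$, while the $\Omega(|x-\mu|^{\alpha})$ hypothesis with $\alpha<1$ forces it to blow up like $|x-\mu|^{\alpha-1}$. Your version just makes the constant $K$ and the limit $c\leq K|x-\mu|^{1-\alpha}\to 0$ explicit, and the side/sign caveat you flag is indeed resolved by the positivity forced by the $\Omega$ lower bound.
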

\begin{proof}
Since $f\left(x\right)-f\left(\mu\right)=\Omega\left(\left|x-\mu\right|^{\alpha}\right)$
as $x\to\mu$, there exists positive number $d$ and $M$ such that
$f\left(x\right)-f\left(\mu\right)\geq M\left|x-\mu\right|^{\alpha}$
at $\left|x-\mu\right|\leq d$.
Then for any $x$ that has $\mu<x<\mu+d$, we have
\[
\frac{f\left(x\right)-f\left(\mu\right)}{x-\mu}\geq M\left(x-\mu\right)^{\alpha-1}
\]
Since $\frac{f\left(x\right)-f\left(\mu\right)}{x-\mu}$ is non-decreasing with respect to $x$, if $\alpha<1$, $\left(x-\mu\right)^{\alpha-1}$ will becomes arbitrarily
high as $x\to\mu^+$, making it impossible to for the above inequality to be true as $x$ decreases to $\mu$.
\end{proof}

The following proposition shows that for convex functions, it is possible to find a $\beta$ at least as large as $1$:

\begin{prop}
If $f\left(x\right)$ is convex, then at $x\to\infty$, $\left|f\left(x\right)\right|$
is either constant or $\Omega\left(\left|x\right|\right)$.
\end{prop}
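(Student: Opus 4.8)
The plan is to read the growth of the function off the monotonicity of its one–sided derivatives. The key point to notice first is that, in the situation where this proposition is applied (and, by the preceding proposition, always after a linear shift, which alters neither convexity nor the Jensen gap), we may assume $f$ is non-increasing on $(-\infty,\mu]$ and non-decreasing on $[\mu,+\infty)$. It therefore suffices to treat one ray, say $[\mu,+\infty)$, the other being symmetric via the left derivative. On this ray recall the standard facts: $f'_+(x)$ exists, is non-decreasing in $x$, is here non-negative, and $f$ majorizes each of its supporting lines, i.e. $f(x)\ge f(x_0)+f'_+(x_0)(x-x_0)$ for $x\ge x_0$.

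Next I would split into two cases according to $L:=\lim_{x\to+\infty}f'_+(x)\ge 0$, which exists because $f'_+$ is monotone. (i) If $L=0$, then a non-negative non-decreasing function with limit $0$ is identically $0$, so $f'_+\equiv 0$ on $[\mu,+\infty)$, hence $f$ is constant there and $|f(x)|$ is constant for large $x$. (ii) If $L>0$ (possibly $L=+\infty$), choose $x_0\ge\mu$ with $c:=f'_+(x_0)\in(0,L]$; the supporting-line bound gives $f(x)\ge f(x_0)+c(x-x_0)$ for all $x\ge x_0$, so $f(x)\to+\infty$ at least linearly and $|f(x)|=\Omega(|x|)$. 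Running the symmetric argument on $(-\infty,\mu]$ and combining the two rays completes the proof.

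The hard part — indeed the only place the hypotheses are genuinely used — is case (i): for a bare convex function $f'_+(x)\to 0$ does \emph{not} force constancy (for instance $f(x)=-\log x$ or $f(x)=-\sqrt{x}$ are convex, diverge sublinearly, and are neither constant nor $\Omega(|x|)$), so the argument really does depend on the one-sided monotonicity supplied by the preceding proposition. I would flag explicitly that within \thmref{thm:main-result-lower} the first hypothesis $f(x)-f(\mu)>0$ excludes the constant alternative entirely: a convex function non-decreasing from $f(\mu)$ that is constant on some $[x_0,+\infty)$ must be constant equal to $f(\mu)$ on all of $[\mu,+\infty)$. That is precisely why, in that context, one can always take the exponent $\beta=1$.
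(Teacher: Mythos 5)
Your proof is correct, but it is not the paper's argument, and the difference is worth spelling out. The paper argues directly from the three-chord slope inequality: if $f$ is non-constant, pick $x_0<x_1$ with $f(x_0)\neq f(x_1)$, assume ``without loss of generality'' that $f(x_0)<f(x_1)$, deduce $f(x)=\Omega(x)$ at $x\to+\infty$ from $\frac{f(x)-f(x_0)}{x-x_0}\geq\frac{f(x_1)-f(x_0)}{x_1-x_0}>0$, and dispose of the remaining cases ``by symmetry.'' You instead work one ray at a time, split on $L=\lim_{x\to+\infty}f'_+(x)$, and use supporting lines; for convex functions the two mechanisms are equivalent, but your organization exposes exactly what the paper's ``considering all the cases'' glosses over. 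As your own examples $-\log x$ and $-\sqrt{x}$ show (and, on all of $\mathbb{R}$, the convex function equal to $-x$ for $x\leq 0$ and $-\log(1+x)$ for $x\geq 0$), a convex function can decrease sublinearly in one direction, and there $|f|$ is neither eventually constant nor $\Omega(|x|)$; the ``WLOG'' in the paper's proof only delivers the conclusion in one of the two directions, so the proposition as literally stated is false without some extra hypothesis. The hypothesis you import --- that after the linear shift of the preceding proposition $f$ is non-increasing on $(-\infty,\mu]$ and non-decreasing on $[\mu,+\infty)$ --- is precisely what rules this out, and it is available wherever the proposition is actually used, namely under condition 1 of Theorem~\ref{thm:main-result-lower}. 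So your route buys a statement that is true direction-by-direction, together with the sharper observation that in that context the ``constant'' alternative never occurs and $\beta=1$ is always attainable; the only improvement I would ask for is to state the monotonicity assumption in the proposition itself rather than smuggling it in from the surrounding discussion.
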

\begin{proof}
If $f\left(x\right)$ is constant, then this proposition automatically
holds true. Otherwise, let $x_{0}<x_{1}$ be two real numbers such
that $f\left(x_{0}\right)\neq f\left(x_{1}\right)$. Without loss
of generality, let us assume $f\left(x_{0}\right)<f\left(x_{1}\right)$.
Since $f\left(x\right)$ is convex, then for` any $x>x_{1}$
\[
\frac{f\left(x\right)-f\left(x_{0}\right)}{x-x_{0}}\geq\frac{f\left(x_{1}\right)-f\left(x_{0}\right)}{x_{1}-x_{0}}>0
\]
That is
\[
f\left(x\right)-f\left(x_{0}\right)\geq\frac{f\left(x_{1}\right)-f\left(x_{0}\right)}{x_{1}-x_{0}}\cdot\left(x-x_{0}\right)
\]
i.e.,  $f\left(x\right)=\Omega\left(x\right)$ at $x\to+\infty$.
Considering all the cases, i.e. when $x\to-\infty$, and when $f\left(x_{0}\right)>f\left(x_{1}\right)$, we get $\left|f\left(x\right)\right|=\Omega\left(\left|x\right|\right)$.
\end{proof}

Although for the lower bound case our result has no similar equivalence
relation with \cite{liao2017sharpening} as the one discussed in Section \ref{subsec:upper-shifting}, the preceding constant $M$ in our Theorem \ref{thm:main-result-lower}, after the linear
shift, can be written as $M=2\cdot\sup_{x\neq\mu}h\left(x;\mu\right)$
when $\alpha=\beta=2$. For this special case, when $f'\left(x\right)$
is convex or concave, the Lemma 1 in \cite{liao2017sharpening} is
still helpful.

\section{Further Discussion, Conclusion and Open problems}

The procedure in the proofs of Theorems \ref{thm:main-result-upper} and \ref{thm:main-result-lower} can be thought
of as a general scheme, which is also followed by \cite{liao2017sharpening},
of obtaining bounds on the Jensen gap. This procedure first writes
$f\left(x\right)-f\left(\mu\right)$ as a product of two functions,
say $s\left(x\right)t\left(x\right)$, where the $\sup$ and $\inf$
of $s$ are easy to compute and the integral $\int t\left(X\right)d\mathcal{P}\left(X\right)$
can be easily computed or further bounded. We then have
\[
\inf s\left(x\right)\cdot\int t\left(X\right)d\mathcal{P}\left(X\right)\leq\J{f}\leq\sup s\left(x\right)\cdot\int t\left(X\right)d\mathcal{P}\left(X\right)
\]
The above formula gives a very general way to bound the Jensen gap.
For example, instead of using $t\left(x\right)=\left|x-\mu\right|^{\alpha}+\left|x-\mu\right|^{n}$
as in Theorem \ref{thm:main-result-upper}, the reader can choose a more general form $t\left(x\right)=\sum_{\alpha\leq\eta\leq n}a_{\eta}\left|x-\mu\right|^{\eta}$
where values of $\eta$ and $a_{\eta}$ are chosen based on the application
to better approximate $f\left(x\right)$, and obtain an improved upper
bound
\[
\mathcal{J}\leq\sup\frac{f\left(x\right)}{t\left(x\right)}\cdot\left(\sum_{\alpha\leq\eta\leq n}a_{\eta}\sigma_{\eta}^{\eta}\right)
\]
Similarly, instead of using $t\left(x\right)=\left(\frac{1}{\left|x-\mu\right|^{\alpha}}+\frac{1}{\left|x-\mu\right|^{\beta}}\right)^{-1}$
as in Theorem \ref{thm:main-result-lower}, the reader can choose $t\left(x\right)=\left(\sum_{\beta\leq\eta\leq\alpha}\frac{a_{\eta}}{\left|x-\mu\right|^{\eta}}\right)^{-1}$
where values of $\eta$ and $a_{\eta}$ depend on the application,
and obtain an improved lower bound
\[
\mathcal{J}\geq\inf\frac{f\left(x\right)}{t\left(x\right)}\cdot\frac{\sigma_{\alpha/2}^{\alpha}}{\sum_{\beta\leq\eta\leq\alpha}a_{\eta}\sigma_{\alpha-\eta}^{\alpha-\eta}}
\]
or
\[
\mathcal{J}\geq\inf\frac{f\left(x\right)}{t\left(x\right)}\cdot\frac{\sigma_{\alpha/\left(1+\frac{1}{k}\right)}^{\alpha}}{\left(\sum_{\beta\leq\eta_{1},\cdots,\eta_{k}\leq\alpha}a_{\eta_{1}}\cdots a_{\eta_{k}}\sigma_{k\alpha-\eta_{1}-\cdots-\eta_{k}}^{k\alpha-\eta_{1}-\cdots-\eta_{k}}\right)^{1/k}}
\]

We have obtained general upper and lower bounds on Jensen's gap that depend on the asymptotic growth of the function and related moments of the random variable's distribution and compared the new bounds with existing upper and lower bounds.
Although fairly general, some conditions in our theorems are still too strong for some situations. For example, in our upper bound, we require the function to grow no faster than polynomial at $x\to\infty$, which excludes some useful functions, such as exponential functions. Also, we require the function to be bounded on any compact subset of $\mathbb{R}$ in our upper bound, which exclude the study of the Jensen gap for functions like $\log(x)$, $\frac{1}{x}$  with random variable $X$ on $\left(0,+\infty\right)$.  Future work is proposed to extend our results to include such cases. 

\section{Declarations}

\subsection{Availability of data and material}

Not applicable

\subsection{Competing interests}

The authors declares no competing interests.

\subsection{Funding}

This work is partly funded by National Institutes of Health \newline [grant number R01GM110077].

\subsection{Authors' contributions}

XG proves most theorems and is responsible for most part of paper writing.
MS has major contribution on the organization of the paper and paper writing of the whole paper, and provides critical insights on Section \ref{subsec:lower-convex} and Proposition \ref{prop:convex-alpha-1}.
AER has contribution on technical discussion and supervision of the overall project.

\subsection{Acknowledgement}

The authors would like to thank Justin S. Smith for a thorough proof reading to correct grammatical errors.
The authors would like to thank J.G. Liao for the discussion on the
connection between our results and his work \cite{liao2017sharpening}.

\subsection{Authors information}

Author's email:

Xiang Gao: qasdfgtyuiop@gmail.com

Meera Sitharam: sitharam@cise.ufl.edu

Adrian E. Roitberg: roitberg@ufl.edu


\begin{thebibliography}{99}
\bibitem{abramovich2004refining}
\uppercase{Shoshana Abramovich, Graham Jameson, and Gord Sinnamon,}
\newblock Refining Jensen's inequality,
\newblock {\em Bulletin math{\'e}matique de la Soci{\'e}t{\'e} des Sciences
  Math{\'e}matiques de Roumanie}, p.p. 3--14, (2004).

\bibitem{abramovich2016some}
\uppercase{Shoshana Abramovich} and \uppercase{Lars-Erik Persson},
\newblock Some new estimates of the 'Jensen gap',
\newblock {\em Journal of Inequalities and Applications}, 2016(1):39, (2016).

\bibitem{cerone2017jensen}
\uppercase{P. Cerone, S. S. Dragomir}, and \uppercase{E. Kikianty},
\newblock Jensen--ostrowski inequalities and integration schemes via the
  darboux expansion,
\newblock {\em Ukrains'kyi Matematychnyi Zhurnal}, 69(08), pp. 1123--1140, (2017).

\bibitem{cerone2015jensen}
\uppercase{Pietro Cerone, Sever S. Dragomir}, and \uppercase{Eder Kikianty},
\newblock Jensen--ostrowski type inequalities and applications for f-divergence
  measures,
\newblock {\em Applied Mathematics and Computation}, 266, pp. 304--315, (2015).

\bibitem{cerone2015inequalities}
\uppercase{Pietro Cerone, Sever~S. Dragomir}, and \uppercase{Eder Kikianty},
\newblock On inequalities of Jensen-Ostrowski type,
\newblock {\em Journal of Inequalities and Applications}, 2015(1):328, (2015).

\bibitem{costarelli2015sharp}
\uppercase{Danilo Costarelli} and \uppercase{Renato Spigler},
\newblock How sharp is the Jensen inequality?,
\newblock {\em Journal of Inequalities and Applications}, 2015(1):69, (2015).

\bibitem{cover2012elements}
\uppercase{Thomas~M. Cover} and \uppercase{Joy~A. Thomas},
\newblock {\em Elements of information theory},
\newblock John Wiley \& Sons, 2012.

\bibitem{dempster1977maximum}
\uppercase{Arthur~P. Dempster, Nan~M. Laird}, and \uppercase{Donald~B. Rubin},
\newblock Maximum likelihood from incomplete data via the em algorithm,
\newblock {\em Journal of the royal statistical society, Series B
  (methodological)}, pp. 1--38, (1977).

\bibitem{dragomir2016jensen}
\uppercase{Sever Dragomir},
\newblock Jensen and ostrowski type inequalities for general lebesgue integral
  with applications,
\newblock {\em Annales Universitatis Mariae Curie-Sklodowska, section
  A--Mathematica}, 70(2):29, (2016).

\bibitem{dragomir2016ostrowski}
\uppercase{Silvestru Dragomir, P.~Cerone}, and \uppercase{Eder Kikianty},
\newblock Ostrowski and Jensen type inequalities for higher derivatives with
  applications,
\newblock {\em Journal of Inequalities and Special Functions}, 7(1), pp. 61--77,
  2016.

\bibitem{PhysRevLett.78.2690}
\uppercase{C.~Jarzynski},
\newblock Nonequilibrium equality for free energy differences,
\newblock {\em Phys. Rev. Lett.}, 78, pp. 2690--2693, (1997).

\bibitem{jensen1906fonctions}
\uppercase{Johan Ludwig William~Valdemar Jensen},
\newblock Sur les fonctions convexes et les in{\'e}galit{\'e}s entre les
  valeurs moyennes,
\newblock {\em Acta Mathematica}, 30(1), pp. 175--193, (1906).

\bibitem{liao2017sharpening}
\uppercase{J. G.~Liao} and \uppercase{Arthur Berg},
\newblock Sharpening Jensen's inequality,
\newblock {\em The American Statistician}, 73:3, 278-281, (2019) DOI: 10.1080/00031305.2017.1419145.

\bibitem{packwood2011moments}
\uppercase{Daniel~M. Packwood},
\newblock Moments of sums of independent and identically distributed random
  variables,
\newblock {\em arXiv preprint arXiv:1105.6283}, (2011).

\bibitem{simic2009jensen}
\uppercase{Slavko Simic},
\newblock Jensen's inequality and new entropy bounds,
\newblock {\em Applied Mathematics Letters}, 22(8), pp. 1262--1265, (2009).

\bibitem{Simic2009}
\uppercase{Slavko Simic,}
\newblock On an upper bound for Jensen's inequality,
\newblock {\em JIPAM. Journal of Inequalities in Pure \& Applied Mathematics
  [electronic only]}, 10(2): Paper No. 60,  5 p., (2009).

\bibitem{steele2004cauchy}
\uppercase{J.~Michael Steele},
\newblock {\em The Cauchy-Schwarz Master Class: an Introduction to the Art of
  Mathematical Inequalities},
\newblock Cambridge University Press, 2004.

\bibitem{wainwright2008graphical}
\uppercase{Martin~J. Wainwright, Michael~I. Jordan,} et~al,
\newblock Graphical models, exponential families, and variational inference,
\newblock {\em Foundations and Trends{\textregistered} in Machine Learning},
  1(1--2), pp. 1--305, (2008).

\bibitem{walker2014lower}
\uppercase{Stephen~G. Walker},
\newblock On a lower bound for the Jensen inequality,
\newblock {\em SIAM Journal on Mathematical Analysis}, 46(5), pp. 3151--3157, 2014.

\bibitem{zlobec2004jensen}
\uppercase{Sanjo Zlobec},
\newblock Jensen's inequality for nonconvex functions,
\newblock {\em Mathematical Communications}, 9(2), pp. 119--124, (2004).
\end{thebibliography}
\end{document}